\theoremstyle{plain}
\newtheorem{theorem}{Theorem}[section]
\newtheorem{lemma}[theorem]{Lemma}
\newtheorem{proposition}[theorem]{Proposition}
\newtheorem{corollary}[theorem]{Corollary}
\theoremstyle{definition}
\newtheorem{definition}[theorem]{Definition}
\theoremstyle{remark}
\newtheorem{remark}[theorem]{Remark}
\DeclareMathOperator{\reg}{\mbox{reg}}
\DeclareMathOperator{\ureg}{\mbox{ureg}}
\begin{document}

\title[Unit-regular elements in monoids of transformations]
{On unit-regular elements in various monoids of transformations}

\author[Mosarof Sarkar]{\bfseries Mosarof Sarkar}
\address{Department of Mathematics, Central University of South Bihar, Gaya, Bihar, India}
\email{mosarofsarkar@cusb.ac.in}

\author[Shubh N. Singh]{\bfseries Shubh N. Singh}
\address{Department of Mathematics, Central University of South Bihar, Gaya, Bihar, India}
\email{shubh@cub.ac.in}


\begin{abstract}
Let $X$ be an arbitrary set and let $T(X)$ denote the full transformation monoid on $X$. We prove that an element of $T(X)$ is unit-regular if and only if it is semi-balanced. For infinite $X$, we discuss regularity of the submonoid of $T(X)$ consisting of all injective (resp. surjective) transformations. For a partition $\mathcal{P}$ of $X$, we characterize unit-regular elements in the monoid $T(X, \mathcal{P})$, under composition, defined as
\[T(X, \mathcal{P}) = \{f\in T(X)\mid (\forall X_i \in \mathcal{P}) (\exists X_j \in \mathcal{P})\; X_i f \subseteq X_j\}.\]
We also characterize (unit-)regular elements in various known submonoids of $T(X, \mathcal{P})$.
\end{abstract}

\subjclass[2010]{20M15, 20M20.}

\keywords{Transformation monoids, Partitions, Regular elements, Unit-regular elements.}

\maketitle

\section{Introduction}
We assume that the reader is familiar with the basic terminology of algebraic semigroup theory. An element $x$ of a monoid $M$ is said to be \emph{unit-regular} in $M$ if there exists a unit $u\in M$ such that $xux = x$; and monoids consisting entirely of unit-regular elements are called \emph{unit-regular}. The concept of unit-regularity, which is a stronger form of regularity, first appeared in the context of rings \cite{g-ehr68}. There are several natural and motivating examples of such monoids. The unit-regularity in a monoid has received wide attention by a number of authors (see, e.g., \cite{t-blyth83, y-chaiya19, schen74, h-alar80, j-fount02, jhick97, d-mca76, r-mcfa84, ytira79}).


\vspace{0.1cm}
Throughout this paper, let $X$ be an arbitrary nonempty set, $\mathcal{P} = \{X_i|\; i\in I\}$ be a partition of $X$, $E$ be the equivalence relation induced by $\mathcal{P}$, and $T(X)$ be the full transformation monoid on $X$. It is well-known that $T(X)$ is regular (cf. \cite[p. 63, Exercise 2.6.15]{howie95}). It is proven in \cite[Proposition 5]{h-alar80} that $T(X)$ is unit-regular if and only if $X$ is finite. Pei \cite[Corollary 2.3]{pei-c05} characterized regular elements in the monoid $T(X, \mathcal{P})$, under composition, defined as
\begin{align*}
T(X, \mathcal{P}) &= \{f\in T(X)\mid(\forall X_i \in \mathcal{P})(\exists X_j \in \mathcal{P})\; X_i f \subseteq X_j\}\\
&= \{f\in T(X)\mid (x, y)\in E \Longrightarrow (xf, yf)\in E\}.
\end{align*}
Pei \cite[Proposition 2.4]{pei-c05} also showed that $T(X, \mathcal{P})$ is regular if and only if $\mathcal{P}$ is trivial. For finite $X$, the authors \cite[Theorem 3.4]{shubh-c21} characterized unit-regular elements in $T(X, \mathcal{P})$. Several interesting results of $T(X, \mathcal{P})$ and its submonoids have also been investigated (see. e.g., \cite{arjo15, arjo-c04, arjo09, ldeng-s10, ldeng-s12, east-c16, east16, v-ferna-c11, v-ferna-c14, hpei-s94, pei-s98, pei-s05, pei-c05, hpei-ac11, hpeidin-s05, shubh-c20, lsun-s07, lsun-jaa13}).

\vspace{0.1cm}

In recent years, the submonoids
\begin{align*}
\Sigma(X,\mathcal{P}) & = \{f\in T(X, \mathcal{P})\mid (\forall X_i \in \mathcal{P})\; Xf \cap X_i \neq \emptyset\},\\\
\Gamma(X, \mathcal{P}) &= \{f\in T(X, \mathcal{P})\mid (\forall X_i\in \mathcal{P})(\exists X_j\in \mathcal{P})\; X_i f = X_j\},\\\
T_{E^*}(X) &= \{f\in T(X)\mid (\forall x, y\in X) (x, y)\in E \Longleftrightarrow (xf, yf)\in E\}
\end{align*}
of $T(X, \mathcal{P})$ have also been considered. Moreover, the regular elements in $\Gamma(X, \mathcal{P})$ and $T_{E^*}(X)$ have been characterized (cf. \cite[Theorem 5.5]{shubh-c21} and \cite[Theorem 3.1]{ldeng-s10}, respectively). For finite $X$, the authors \cite[Proposition 3.5]{shubh-c21} characterized unit-regular elements in $\Sigma(X,\mathcal{P})$ and showed in \cite[Proposition 5.8]{shubh-c21} that every regular element in $\Gamma(X, \mathcal{P})$ is unit-regular.

\vspace{0.1cm}
Let
\begin{align*}
\Omega(X,\mathcal{P}) &= \{f\in T(X, \mathcal{P})\mid (\forall x, y\in X, x\neq y)(x, y)\in E \Longrightarrow xf \neq yf\}.
\end{align*}
It is routine to verify that $\Omega(X,\mathcal{P})$ is a submonoid of $T(X, \mathcal{P})$. Clearly,  if $E = \{(x, x)\mid x\in X\}$, then $ \Omega(X,\mathcal{P}) = T(X)$; and if $E = X \times X$, then $\Omega(X,\mathcal{P}) = \{f\in T(X)\mid f \mbox{ is injective}\}$.



\vspace{0.1cm}
The rest of this paper is structured as follows. In the next section, we provide definitions and notation used throughout the paper. In Section $3$, we prove that an element of $T(X)$ is unit-regular if and only if it is semi-balanced. For infinite $X$, we discuss regularity of the submonoids $\Omega(X) = \{f\in T(X) \mid f \mbox{ is injective}\}$ and $\Gamma(X) = \{f\in T(X)\mid f \mbox{ is surjective}\}$ of $T(X)$. In Section $4$, we characterize unit-regular elements in $T(X, \mathcal{P})$. In Section $5$, we first characterize regular elements in $\Sigma(X,\mathcal{P})$ and then characterize unit-regular elements in
$\Sigma(X,\mathcal{P})$, $\Gamma(X, \mathcal{P})$, and  $T_{E^*}(X)$. In Section $6$, we finally characterize regular and unit-regular elements in $\Omega(X,\mathcal{P})$.

\section{Preliminaries and Notation}


Let $X$ be a set. The cardinality of $X$ is denoted by $|X|$. For $A, B \subseteq X$, we write $A\setminus B$ to denote the set of elements of $A$ that are not in $B$.  A \emph{partition} of $X$ is a collection of nonempty disjoint subsets, called \emph{blocks}, whose union is $X$. A partition is called \emph{trivial} if it has only singleton blocks or a single block. We denote by $\Delta(X)$ the equivalence relation $\{(x,x)\mid x\in X\}$ on $X$. A \emph{cross-section} of an equivalence relation $E$ on $X$ is a subset of $X$ which intersects every equivalence class of $E$ in exactly one point. The letter $I$ will be reserved for an indexing set.

\vspace{0.1cm}
We write maps to the right of their arguments, and compose them accordingly. The composition of maps will be denoted simply by juxtaposition. Let $f\colon X \to Y$ be a map. We denote by $Af$ the image of a subset $A \subseteq X$ under $f$. The preimage of a subset $B\subseteq Y$ under $f$ is denoted by $Bf^{-1}$. We write $\mbox{dom}(f)$, $\mbox{codom}(f)$, and $\mbox{im}(f)$ for the domain, codomain, and image of $f$, respectively. The \emph{defect} of $f$, denoted by $\mbox{d}(f)$, is defined by $\mbox{d}(f) = |Y\setminus \mbox{im}(f)|$. The \emph{kernel} of $f$, denoted by $\ker(f)$, is an equivalence relation on $X$ defined by $\ker(f) = \{(x, y)\in X\times X|\; xf = yf\}$. We denote by $\pi(f)$ the partition of $X$ induced by $\ker(f)$, namely $\pi(f) = \{y f^{-1}|\; y \in Xf\}$. Note that a map $f$ is injective if and only if $\ker(f) = \Delta(X)$. We write $T_{f}$ to denote any cross-section of $\ker(f)$. Note that $|T_f| = |\mbox{im}(f)|$. The \emph{collapse} of $f$, denoted by $\mbox{c}(f)$, is defined by $\mbox{c}(f) = |X\setminus T_f|$ (cf. \cite[p. 1356]{hig-how-rus98}). If $A \subseteq X$ and $B\subseteq Y$ such that $Af \subseteq B$, then there is a map $g \colon A \to B$ defined by $xg = xf$ for all $x\in A$ and, in this case, we say that the map $g$ \emph{is induced by} $f$.

\vspace{0.1cm}
Let $M$ be a monoid with identity $e$. An element $x\in M$ is called \emph{unit} if there exists $y\in M$ such that $xy = yx = e$. The set $U(M)$ of all units in $M$ forms a subgroup of $M$, and is called the \emph{group of units} of $M$. An element $x\in M$ is said to be \emph{regular} in $M$ if there exists $y\in M$ such that $xyx = x$. Such an element $y$ is called an \emph{inner inverse} for $x$, and it need not be unique. Denote $U(x) = \{y\in U(M)\;|\; x = xyx\}$. If in addition, the inner inverse $y$ is a unit, then $x$ is said to be \emph{unit-regular} in $M$. By $\mbox{reg}(M)$ and $\mbox{ureg}(M)$, we mean the sets of all regular and unit-regular elements in $M$, respectively. If $\mbox{reg}(M) = M$ (resp. $\mbox{ureg}(M) = M)$, we say that the monoid $M$ is \emph{regular} (resp. \emph{unit-regular}). Note that $U(M) \subseteq \mbox{ureg}(M) \subseteq \mbox{reg}(M)$, and if $M$ is  unit-regular then any submonoid of $M$ containing $U(M)$ is also unit-regular.

\vspace{0.1cm}
Let $X$ be a nonempty set. A \emph{selfmap} on $X$ is a map from $X$ into $X$. A \emph{permutation} of $X$ is a bijective selfmap on $X$. We say that a selfmap $f$ on $X$ \emph{preserves} a partition $\mathcal{P}$ of $X$ if for every $X_i \in \mathcal{P}$, there exists $X_j \in \mathcal{P}$ such that $X_i f \subseteq X_j$. We denote by $S(X)$ the symmetric group on $X$. We write $\Omega(X)$ (resp. $\Gamma(X)$) to denote the submonoids of $T(X)$ consisting of all injective (resp. surjective) selfmaps on $X$. Note that the monoids $T(X)$, $\Omega(X)$, and $\Gamma(X)$ have the symmetric group $S(X)$ as their group of units; and $\Omega(X) =\Gamma(X) = S(X)$ if and only if $X$ is finite.

\vspace{0.1cm}
We refer the reader to the standard book \cite{howie95} for additional information from algebraic semigroup theory.

\vspace{0.1cm}
\section{Unit-regular elements in $T(X)$}
In this section, we characterize the unit-regular elements in $T(X)$. We then provide an alternative proof of Proposition 5 in \cite{h-alar80} which characterizes the unit-regularity of $T(X)$. We also characterize the regularity of $\Omega(X)$ and $\Gamma(X)$. We begin by proving the following lemma.

\begin{lemma}\label{cross-section-ker}
Let $f\colon X\to Y$ and $g\colon Y\to X$ be maps. If
$fgf=f$, then $\mbox{im}(fg)$ is a cross-section of the equivalence relation $\ker(f)$.
\end{lemma}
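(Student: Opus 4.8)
The plan is to show that $\operatorname{im}(fg)$ meets each $\ker(f)$-class in exactly one point by establishing two things: that it meets every class (surjectivity onto classes), and that it meets no class twice (injectivity across classes). Recall that the $\ker(f)$-classes are precisely the sets $y f^{-1}$ for $y \in \operatorname{im}(f)$, so a cross-section is a subset of $X$ hitting each nonempty fiber of $f$ exactly once. The key structural fact I would exploit is the hypothesis $fgf = f$, which says that $fg$ restricted to $\operatorname{im}(f)$ acts as the identity: indeed, for any $x \in X$, writing $y = xf$, we have $y(gf) = x(fgf) = xf = y$, so $fg$ fixes every point of $\operatorname{im}(f)$ after one application of $f$. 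I would unpack this carefully to drive both halves of the argument.

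First I would prove that $\operatorname{im}(fg)$ is contained in a set of fiber-representatives, i.e. that it meets each fiber at most once. Suppose $x_1(fg)$ and $x_2(fg)$ lie in the same $\ker(f)$-class, meaning $x_1(fgf) = x_2(fgf)$. By the relation $fgf = f$ this collapses to $x_1 f = x_2 f$, and then applying $g$ gives $x_1(fg) = x_2(fg)$; hence distinct elements of $\operatorname{im}(fg)$ lie in distinct fibers. This gives the ``at most one'' direction and is essentially immediate from the cocycle-type identity above.

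For the ``at least one'' direction I would show every fiber $y f^{-1}$ with $y \in \operatorname{im}(f)$ contains a point of $\operatorname{im}(fg)$. Pick any $x \in y f^{-1}$, so $xf = y$. The natural candidate point of $\operatorname{im}(fg)$ lying in this fiber is $y g = x(fg) \in \operatorname{im}(fg)$; I must check it really sits in $y f^{-1}$, that is $(yg)f = y$. But $(yg)f = y(gf) = x(fgf) = xf = y$ by the hypothesis. Thus $yg \in \operatorname{im}(fg) \cap y f^{-1}$, so every $\ker(f)$-class is met.

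Combining the two directions yields that $\operatorname{im}(fg)$ intersects each equivalence class of $\ker(f)$ in exactly one point, which is the definition of a cross-section. I do not anticipate a genuine obstacle here; the whole proof is a direct consequence of reading off the identity $y(gf) = y$ for $y \in \operatorname{im}(f)$ from $fgf = f$. The only point requiring mild care is the bookkeeping of which fibers are nonempty — one should restrict attention to $y \in \operatorname{im}(f)$ since those index exactly the $\ker(f)$-classes — and making sure the representative $yg$ is verified to land in the correct fiber rather than merely asserting it.
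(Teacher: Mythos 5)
Your proof is correct and takes essentially the same approach as the paper's: both establish that $\operatorname{im}(fg)$ meets each fiber $yf^{-1}$, $y \in \operatorname{im}(f)$, at least once via the computation $(yg)f = (xf)gf = xf = y$, and at most once by exploiting the identity $fgf = f$. Your uniqueness step --- deducing $x_1 f = x_2 f$ from membership in a common fiber and then applying $g$ to both sides --- is a slightly more streamlined rendering of the paper's element-tracing argument, but the underlying idea is identical.
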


\begin{proof}
Let $yf^{-1} \in \pi(f)$. It suffices to show that $|(Xf)g \cap yf^{-1}| = 1$. Since $y\in Xf$, there exists $x\in X$ such that $xf=y$. Write $yg = z$. Clearly $z = yg \in (Xf)g$. Since $fgf = f$, we obtain \[zf= (yg)f = (xf)gf = xf = y.\] It follows that $z\in yf^{-1}$, so $|(Xf)g\cap yf^{-1}| \ge 1$.

\vspace{0.1cm}
Moreover, suppose that $z_1, z_2 \in (Xf)g\cap yf^{-1}$. Then $z_1f =y =z_2f$. As $z_1, z_2 \in (Xf)g$, there exist $y_1,y_2\in Xf$ such that $y_1 g= z_1$ and $y_2 g = z_2$. Since $y_1,y_2\in Xf$, there exist $x_1, x_2\in X$ such that $x_1f = y_1$ and $x_2 f = y_2$. Since $fgf = f$, we obtain \[y_1 = x_1 f = (x_1f)gf = (y_1g)f = z_1 f = y.\] Similarly, we obtain $y_2 = y$. It follows that $y_1 = y_2$, so $z_1 = z_2$. Hence $|(Xf)g\cap yf^{-1}|= 1$. This completes the proof.
\end{proof}

\vspace{0.1cm}
The following corollary is a direct consequence of Lemma \ref{cross-section-ker} by replacing $Y$ with $X$.

\begin{corollary}\label{cros-sec-Tx}
Let $f, g\in T(X)$. If $fgf = f$, then $\mbox{im}(fg)$ is a cross-section of the equivalence relation $\ker(f)$.
\end{corollary}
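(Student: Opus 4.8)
The plan is to derive this statement as an immediate specialization of Lemma~\ref{cross-section-ker}, exploiting the fact that every element of $T(X)$ is a selfmap of $X$. Concretely, a pair $f, g \in T(X)$ is nothing but a pair of maps $f\colon X \to X$ and $g\colon X \to X$, so it already fits the setup of the lemma once we take the codomain $Y$ appearing there to be $X$ itself. Thus the corollary requires no new argument beyond matching the typing of the objects.

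First I would verify that the hypotheses align: given $f, g \in T(X)$ satisfying $fgf = f$, reading $f$ as a map $X \to X$ and $g$ as a map $X \to X$ makes the identity $fgf = f$ coincide verbatim with the hypothesis of Lemma~\ref{cross-section-ker} in the case $Y = X$. Then I would simply invoke the lemma, which yields that $\mbox{im}(fg)$ is a cross-section of $\ker(f)$ — precisely the assertion to be proved. The only point needing attention is that the domain–codomain conditions of the lemma are met when $Y$ is replaced by $X$, and this is automatic since the elements of $T(X)$ are by definition selfmaps on $X$; consequently there is no genuine obstacle, and the corollary follows at once.
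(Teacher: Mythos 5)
Your proposal is correct and matches the paper exactly: the paper also obtains this corollary as an immediate specialization of Lemma~\ref{cross-section-ker} with $Y$ replaced by $X$, noting that elements of $T(X)$ are selfmaps so the hypotheses carry over verbatim. No further argument is needed.
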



Before we begin to prove one of the main theorems, we recall an important definition from \cite{hig-how-rus98}.

\begin{definition}\cite[p. 1356]{hig-how-rus98}
An element $f$ of $T(X)$ is said to be \emph{semi-balanced} if $\mbox{c}(f) = \mbox{d}(f)$ .
\end{definition}

The following theorem gives a characterization of unit-regular elements in $T(X)$.

\begin{theorem}\label{char-unit-reg-TX}
Let $f\in T(X)$. Then $f\in \ureg(T(X))$ if and only if $f$ is semi-balanced.
\end{theorem}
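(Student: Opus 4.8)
The plan is to prove both implications by exploiting the natural bijection that $f$ induces between its image and any cross-section of its kernel. Write $g$ for a prospective unit, so that, since $U(T(X)) = S(X)$, unit-regularity of $f$ amounts to $fgf = f$ for some permutation $g \in S(X)$.

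For the forward direction, I would suppose $fgf = f$ with $g \in S(X)$. By Corollary \ref{cros-sec-Tx}, the set $\mbox{im}(fg) = (Xf)g$ is a cross-section of $\ker(f)$; call it $T_f$. Since $g$ is a bijection of $X$ carrying $Xf$ onto $T_f$, it must carry the complement $X \setminus Xf$ bijectively onto $X \setminus T_f$. Hence
\[
\mbox{d}(f) = |X \setminus Xf| = |X \setminus T_f| = \mbox{c}(f),
\]
so $f$ is semi-balanced. This step is short precisely because the bijectivity of $g$ transports the equality of the complements for free.

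For the converse, I would suppose $\mbox{c}(f) = \mbox{d}(f)$ and build a suitable permutation by hand. Fix a cross-section $T_f$ of $\ker(f)$. For each $y \in Xf$ the class $yf^{-1}$ meets $T_f$ in a unique point $t_y$, and $t_y f = y$; define $g_0 \colon Xf \to T_f$ by $yg_0 = t_y$. Then $g_0$ is a bijection from $Xf$ onto $T_f$, and for every $x \in X$ we get $xfg_0f = t_{xf}f = xf$, so $fg_0f = f$. Since $fgf$ depends only on the values of $g$ on $Xf$, it now suffices to extend $g_0$ to a permutation of $X$. Such an extension requires a bijection from $X \setminus Xf$ onto $X \setminus T_f$, and one exists precisely because $|X \setminus Xf| = \mbox{d}(f) = \mbox{c}(f) = |X \setminus T_f|$. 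Gluing $g_0$ with any such bijection $h$ yields a permutation $g = g_0 \cup h$ satisfying $fgf = f$, whence $f$ is unit-regular.

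The only delicate point, and the one place where the semi-balanced hypothesis is genuinely used, is the matching of the two complements by a bijection. For infinite $X$ this rests on the fact that $\mbox{c}(f)$ is independent of the chosen cross-section (each kernel class $C$ contributes $|C \setminus \{t_C\}|$, which does not depend on the representative $t_C$), a fact already implicit in the definition of $\mbox{c}(f)$ that I would record as a one-line remark. Everything else is the routine verification that $g_0$ inverts $f$ along the image.
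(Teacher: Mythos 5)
Your proof is correct and follows essentially the same route as the paper's: the forward direction uses Corollary \ref{cros-sec-Tx} and the fact that a permutation carries $X\setminus Xf$ onto $X\setminus T_f$, and the converse glues the canonical bijection $Xf\to T_f$ (your $g_0$, the paper's $g_1$) with an arbitrary bijection between the complements, whose existence is exactly the semi-balanced hypothesis. Your added remark that $\mbox{c}(f)$ is independent of the chosen cross-section, and your explicit check that $fg_0f=f$, are details the paper leaves implicit, but they do not change the argument.
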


\begin{proof}
Suppose first that $f\in \ureg(T(X))$. Then there exists $g \in S(X)$ such that $fgf= f$. Therefore, by Corollary \ref{cros-sec-Tx}, the set $\mbox{im}(fg)$ is a cross-section of the equivalence relation $\ker(f)$. Write $\mbox{im}(fg) = T_f$. Since $g$ is bijective, we obtain \[(X\setminus Xf)g= Xg \setminus (Xf)g = X\setminus T_f.\] It follows that $\mbox{c}(f) =|X\setminus T_f|= |X\setminus Xf| = \mbox{d}(f)$, so $f$ is semi-balanced.

\vspace{0.1cm}
Conversely, suppose that $f$ is semi-balanced. Note that $ |T_f| = |Xf|$ and  $|T_f\cap yf^{-1}| = 1$ for all $yf^{-1} \in \pi(f)$. Therefore, we choose a bijection $g_1\colon Xf\to T_f$ such that $yg_1=x$ whenever $xf = y$ and  $x\in T_f$. Since $|X\setminus T_f| =|X\setminus Xf|$, we arbitrarily choose any bijection $g_2\colon X\setminus Xf\to X\setminus T_f$. Using these two bijections, we now define a map $g\colon X\to X$ by
\begin{equation*}
yg=
\begin{cases}
yg_1 & \text{if $y\in Xf$},\\
yg_2 & \text{if $y\in X\setminus Xf$}.
\end{cases}
\end{equation*}
Clearly $g\in S(X)$. One can also verify in a routine manner that $fgf= f$. Hence $f\in \ureg(T(X))$.
\end{proof}


The following simple lemma will be useful in the sequel.

\begin{lemma}\label{inj-trans-X}
Let $f\colon X \to Y$ be a map. Then
\begin{enumerate}
\item[\rm(i)] $f$ is injective if and only if $\mbox{c}(f) = 0$;
\item[\rm(ii)] $f$ is surjective if and only if $\mbox{d}(f)= 0$.
\end{enumerate}
\end{lemma}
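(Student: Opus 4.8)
The plan is to derive both equivalences directly from the definitions introduced in Section~2, treating part~(ii) as an immediate unwinding and part~(i) as a short argument relating the collapse to the kernel.

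For part~(ii), I would simply recall that $\mbox{d}(f) = |Y\setminus \mbox{im}(f)|$. Since a set has cardinality zero exactly when it is empty, the condition $\mbox{d}(f) = 0$ is equivalent to $Y\setminus \mbox{im}(f) = \emptyset$, that is, to $\mbox{im}(f) = Y$; and this is precisely the statement that $f$ is surjective. No further work is needed here.

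For part~(i), I would use the two facts from the preliminaries that $f$ is injective if and only if $\ker(f) = \Delta(X)$, and that $T_f$ denotes a cross-section of $\ker(f)$ with $\mbox{c}(f) = |X\setminus T_f|$. For the forward implication, if $f$ is injective then $\ker(f) = \Delta(X)$, so every $\ker(f)$-class is a singleton; hence the only cross-section is $X$ itself, giving $T_f = X$ and $\mbox{c}(f) = 0$. For the converse, assuming $\mbox{c}(f) = 0$ means $X\setminus T_f = \emptyset$, and since $T_f \subseteq X$ this forces $T_f = X$. Because a cross-section meets each $\ker(f)$-class in exactly one point, the equality $T_f = X$ can hold only if each class contains exactly one element, i.e.\ every class is a singleton; therefore $\ker(f) = \Delta(X)$ and $f$ is injective.

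The only step that requires a moment's thought is the converse direction of part~(i), where one must argue that a cross-section exhausting all of $X$ forces every $\ker(f)$-class to be a singleton. This is immediate from the defining property of a cross-section, so no genuine obstacle arises; the lemma is essentially a restatement of the definitions of collapse and defect.
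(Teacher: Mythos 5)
Your proof is correct. The paper states this lemma without any proof at all (it is introduced merely as a ``simple lemma''), and your argument is exactly the definitional unwinding the authors evidently have in mind: $\mbox{d}(f)=0$ is equivalent to $Y\setminus \mbox{im}(f)=\emptyset$, and $\mbox{c}(f)=0$ forces $T_f=X$, which by the defining property of a cross-section forces every $\ker(f)$-class to be a singleton, i.e.\ $\ker(f)=\Delta(X)$.
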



If $X$ is finite, then every element of $T(X)$ is semi-balanced (cf. \cite[p. 1356]{hig-how-rus98}). However, we have the following lemma.

\vspace{0.0cm}
\begin{lemma}\label{finite-setminus-equal}
Every element of $T(X)$ is semi-balanced if and only if $X$ is finite.
\end{lemma}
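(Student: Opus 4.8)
The plan is to prove both implications separately: the forward direction (finiteness implies every element is semi-balanced) is a one-line finite cardinal computation, while the converse requires exhibiting, for infinite $X$, a single transformation that fails to be semi-balanced.

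First I would dispose of the easy direction. Suppose $X$ is finite and let $f \in T(X)$ be arbitrary. Recall that $|T_f| = |\mbox{im}(f)|$. Because $X$ is finite the relevant cardinalities subtract, so
$\mbox{c}(f) = |X \setminus T_f| = |X| - |T_f| = |X| - |\mbox{im}(f)| = |X \setminus \mbox{im}(f)| = \mbox{d}(f)$,
and hence $f$ is semi-balanced. Since $f$ was arbitrary, every element of $T(X)$ is semi-balanced.

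For the converse I would argue by contraposition, assuming $X$ is infinite and constructing one $f \in T(X)$ that is not semi-balanced. As $X$ is infinite it contains a countably infinite subset $\{x_0, x_1, x_2, \dots\}$. I would then define $f \in T(X)$ by $x_i f = x_{i+1}$ for every $i \ge 0$ and $xf = x$ for every $x \in X \setminus \{x_0, x_1, x_2, \dots\}$. This $f$ is injective, so $\mbox{c}(f) = 0$ by Lemma~\ref{inj-trans-X}(i); but $x_0 \notin \mbox{im}(f)$, so $f$ is not surjective and therefore $\mbox{d}(f) \ge 1$ by Lemma~\ref{inj-trans-X}(ii). Thus $\mbox{c}(f) = 0 \neq \mbox{d}(f)$, so $f$ is not semi-balanced, which establishes the contrapositive.

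I expect no genuine obstacle here: the forward direction is routine finite arithmetic, and the only real content of the converse is producing an injective non-surjective selfmap, which exists precisely because an infinite set is Dedekind-infinite (realized concretely by the shift on a countably infinite subset). The one point worth flagging is the mild reliance on (countable) choice to extract the countably infinite subset, which is standard in this setting; alternatively, one could build a surjective non-injective map to obtain $\mbox{d}(f)=0 \neq \mbox{c}(f)$ instead.
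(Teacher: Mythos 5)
Your proposal is correct and follows essentially the same route as the paper: both directions are handled the same way, with the converse established by exhibiting an injective, non-surjective selfmap on an infinite $X$ (the paper uses a bijection $X \to X\setminus\{x\}$, you use a shift on a countably infinite subset -- the same Dedekind-infiniteness idea) and invoking Lemma~\ref{inj-trans-X} to get $\mbox{c}(f)=0 \neq \mbox{d}(f)$. Your explicit cardinality computation for the finite direction merely spells out what the paper dismisses as immediate.
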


\begin{proof}
Suppose that every element of $T(X)$ is semi-balanced. Assume, to the contrary, that $X$ is an infinite set. Choose an arbitrary element $x$ of $X$. Note that the sets $X$ and $X\setminus \{x\}$ have the same cardinality. Therefore there exists is a bijective map $g\colon X \to X\setminus \{x\}$, so the map $g\colon X \to X$ is injective but not surjective. Then we get $\mbox{c}(g) = 0$ and $\mbox{d}(g)\ge 1$ by Lemma \ref{inj-trans-X}. This leads to a contradiction. Hence $X$ is finite.
	
\vspace{0.1cm}
The converse is immediate.
\end{proof}

\vspace{0.1cm}
From Lemma \ref{finite-setminus-equal} and Theorem \ref{char-unit-reg-TX}, we have the corollary below that characterizes unit-regularity of $T(X)$.

\begin{corollary} \label{unit-reg-TX}
The monoid $T(X)$ is unit-regular if and only if $X$ is finite.
\end{corollary}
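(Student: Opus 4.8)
The plan is to obtain this as an immediate consequence of the two preceding results by unwinding the definition of a unit-regular monoid. By definition, $T(X)$ is unit-regular precisely when $\ureg(T(X)) = T(X)$, that is, when every element of $T(X)$ is unit-regular. So the first move is simply to rewrite the statement ``$T(X)$ is unit-regular'' as the universally quantified statement ``every $f \in T(X)$ is unit-regular in $T(X)$.''

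Next I would invoke Theorem \ref{char-unit-reg-TX}, which asserts the pointwise equivalence $f \in \ureg(T(X)) \iff f$ is semi-balanced. Since this equivalence holds for each individual $f$, it upgrades to the quantified level: every element of $T(X)$ is unit-regular if and only if every element of $T(X)$ is semi-balanced. This replaces the semigroup-theoretic condition with a purely combinatorial one about the collapse and defect of maps.

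Finally I would apply Lemma \ref{finite-setminus-equal}, which states exactly that every element of $T(X)$ is semi-balanced if and only if $X$ is finite. Chaining the three equivalences together yields $T(X)$ is unit-regular $\iff$ every $f \in T(X)$ is semi-balanced $\iff$ $X$ is finite, which is the claim. There is no genuine obstacle here: the corollary is a formal composition of Theorem \ref{char-unit-reg-TX} and Lemma \ref{finite-setminus-equal}, and the only point requiring a word of care is the harmless passage from the pointwise equivalence of the theorem to its quantified form over all of $T(X)$.
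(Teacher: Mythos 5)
Your proposal is correct and is essentially identical to the paper's own proof, which likewise derives the corollary by chaining Theorem \ref{char-unit-reg-TX} with Lemma \ref{finite-setminus-equal} in both directions. Nothing is missing.
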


\begin{proof}
Suppose that $T(X)$ is unit-regular. Then, by Theorem \ref{char-unit-reg-TX}, every element of $T(X)$ is semi-balanced. Hence $X$ is finite by Lemma \ref{finite-setminus-equal}.

\vspace{0.1cm}
Conversely, suppose that $X$ is finite and let $f\in T(X)$. Then $f$ is semi-balanced by Lemma \ref{finite-setminus-equal}. It follows from Theorem \ref{char-unit-reg-TX} that $f\in \ureg(T(X))$. Since $f$ is an arbitrary element, the monoid $T(X)$ is unit-regular.
\end{proof}

\vspace{0.1cm}
If $X$ is a finite set and $f\in T(X)$, the next proposition gives the cardinality of the set $U(f)$.
\begin{proposition}
Let $X$ be a finite set and $f\in T(X)$. Then  \[|U(f)| = \mbox{d}(f)!\displaystyle\prod_{x\in Xf}|xf^{-1}|.\]
\end{proposition}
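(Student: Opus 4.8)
The plan is to translate the defining equation $fgf=f$ into a pointwise condition on where a permutation $g$ may send the image points of $f$, and then to enumerate the admissible $g$ by a two-stage construction. First I would observe that $fgf=f$ holds if and only if $(yg)f=y$ for every $y\in Xf$: writing an arbitrary source point as $x$ and setting $y=xf$, the equation $xfgf=xf$ becomes $(yg)f=y$, and as $x$ ranges over $X$ the point $y$ ranges over all of $Xf$. Hence $fgf=f$ is equivalent to the single requirement that $yg\in yf^{-1}$ for each $y\in Xf$, with $g$ otherwise an arbitrary permutation of $X$.

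Next I would exploit that the blocks $yf^{-1}$ for $y\in Xf$ are precisely the members of $\pi(f)$ and so partition $X$ into pairwise disjoint nonempty sets. I would count the permutations $g\in S(X)$ meeting the condition above in two independent stages. In the first stage I assign $g$ on the image set $Xf$: each $y\in Xf$ must be sent into its own block $yf^{-1}$, and because distinct blocks are disjoint, any such choice automatically produces distinct (hence injective) images. The number of ways to carry this out is therefore $\prod_{x\in Xf}|xf^{-1}|$, and the resulting set $(Xf)g$ is a cross-section of $\ker(f)$, in agreement with Corollary \ref{cros-sec-Tx}.

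In the second stage I extend $g$ to the remaining source points $X\setminus Xf$. After the first stage exactly $|Xf|$ target points have been consumed, one in each block, so $|X|-|Xf|=\mbox{d}(f)$ target points remain unused; since $g$ must be a bijection, these are exactly the images of the $\mbox{d}(f)$ points of $X\setminus Xf$, and they may be assigned bijectively in $\mbox{d}(f)!$ ways with no further constraints. Multiplying the two stage-counts yields $|U(f)| = \mbox{d}(f)!\prod_{x\in Xf}|xf^{-1}|$. Finiteness of $X$ enters to guarantee that the factorial and the product are genuine finite counts (and, via Lemma \ref{finite-setminus-equal} and Theorem \ref{char-unit-reg-TX}, that $U(f)$ is nonempty).

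The step demanding the most care is the bookkeeping establishing that every admissible $g$ arises exactly once from this construction: I must check that the first-stage choices are mutually independent across blocks (forced by disjointness), that they neither clash with one another nor preempt a second-stage assignment, and that the second stage is entirely free once the first is fixed. Once this independence is in place the product formula is immediate, and the translation of $fgf=f$ into the condition $yg\in yf^{-1}$ is the conceptual key that renders the enumeration transparent.
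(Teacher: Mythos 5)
Your proof is correct and follows essentially the same route as the paper: both reduce $fgf=f$ to choosing one preimage point in each fibre $yf^{-1}$ (equivalently, a cross-section of $\ker(f)$, giving the factor $\prod_{y\in Xf}|yf^{-1}|$) and then extending freely by a bijection between the two complementary sets of size $\mbox{d}(f)$, giving the factor $\mbox{d}(f)!$. Your pointwise reformulation $yg\in yf^{-1}$ for all $y\in Xf$ is a slightly more direct way of packaging what the paper extracts from Corollary \ref{cros-sec-Tx}, but the enumeration is identical.
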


\begin{proof}
Let $f\in T(X)$. Since $X$ is finite, there exists $g\in S(X)$ such that $fgf=f$ by Corollary \ref{unit-reg-TX}. Then $\mbox{im}(fg)$ is a cross-section of $\ker(f)$ by Corollary \ref{cros-sec-Tx}. Write $T_f = \mbox{im}(fg)$. Note that $|T_f| = |Xf|$ and $T_f$ depends on $g$. Observe that any such bijection $g \in S(X)$ can be only of the following form:
\begin{equation*}
yg=
\begin{cases}
yg_1 & \text{if $y\in Xf$},\\
yg_2 & \text{if $y\in X\setminus Xf$},
\end{cases}
\end{equation*}
where $g_1\colon Xf\to T_f$ is a bijection defined by $yg_1=x$ whenever $xf = y$ and $x\in T_f$; and $g_2\colon X\setminus Xf \to X\setminus T_f$ is any bijection. Note that for every cross-section $T_f$ of $\ker(f)$ there is unique such bijection $g_1$ and $\mbox{d}(f)!$ number of such bijections $g_2$.
	
\vspace{0.1cm}
By the multiplication principle, the total number of possible cross-sections $T_f$ of $\ker(f)$ is $ \prod_{y\in Xf}|yf^{-1}|$. Hence \[|U(f)| = \mbox{d}(f)!\;\prod_{y\in Xf}|yf^{-1}|\] by the multiplication principle.
\end{proof}

\vspace{0.1cm}

We now characterize regularity of $\Omega(X)$ in the following proposition.

\begin{proposition}\label{inj-reg}
The monoid $\Omega(X)$ is regular if and only if $\Omega(X) = S(X)$.
\end{proposition}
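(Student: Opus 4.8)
The plan is to prove the two implications separately, with the reverse direction being immediate and the forward direction carrying all the content. For the reverse direction, if $\Omega(X) = S(X)$ then $\Omega(X)$ is a group, and every group is regular since $x x^{-1} x = x$ for all $x$; hence $\Omega(X)$ is regular. (In fact it is unit-regular.) For the forward direction, I would fix the identity map $1_X$ of $\Omega(X)$ and argue directly that regularity forces every injective selfmap to be bijective, so that $\Omega(X) \subseteq S(X)$; since $S(X) \subseteq \Omega(X)$ always holds, this yields $\Omega(X) = S(X)$.

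Assume then that $\Omega(X)$ is regular and fix an arbitrary $f \in \Omega(X)$. By regularity there is $g \in \Omega(X)$ with $fgf = f$. The first key step is to exploit the injectivity of $f$ to collapse this relation. For every $x \in X$ we have $((xf)g)f = x(fgf) = xf$, and cancelling the injective map $f$ on the right (if $uf = xf$ with $f$ injective then $u = x$) gives $(xf)g = x$; that is, $x(fg) = x$ for all $x$, so $fg = 1_X$. The second key step is to use that the inner inverse $g$ is itself required to lie in $\Omega(X)$, hence is injective. From $fg = 1_X$ we obtain $X = X(fg) = (Xf)g \subseteq Xg \subseteq X$, so $Xg = X$ and $g$ is surjective; combined with the injectivity of $g$, this makes $g$ bijective, so $g \in S(X)$. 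Then $f = f\,1_X = f(gg^{-1}) = (fg)g^{-1} = g^{-1} \in S(X)$, so $f$ is bijective. As $f$ was arbitrary, $\Omega(X) \subseteq S(X)$, completing the argument.

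I expect the only real subtlety to be recognizing which hypothesis does the work: the point is precisely that the inner inverse must stay inside $\Omega(X)$, i.e. must be injective. This is what distinguishes regularity \emph{in} $\Omega(X)$ from mere regularity of $f$ as an element of $T(X)$ (where every $f$ is regular, but the inner inverse is allowed to collapse points and hence need not be injective). Once one sees that $fgf = f$ with $f$ injective already forces $fg = 1_X$, and hence forces $g$ to be surjective, the additional injectivity of $g$ pins it down to $S(X)$ and the conclusion follows with no further calculation. No machinery beyond this cancellation is needed; in particular Lemma~\ref{inj-trans-X} could be invoked to phrase surjectivity of $g$ via $\mbox{d}(g) = 0$, but it is not essential.
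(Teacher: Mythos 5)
Your proof is correct, but it takes a more elementary route than the paper's. The paper's forward direction runs through its cross-section machinery: from $fgf=f$ it invokes Corollary~\ref{cros-sec-Tx} to see that $\mbox{im}(fg)$ is a cross-section of $\ker(f)$, uses $\mbox{c}(f)=0$ to conclude that this cross-section is all of $X$ (so $\mbox{d}(fg)=0$), and then cites \cite[Theorem 2.2(iv)]{hig-how-rus98} together with the injectivity of $g$ to deduce $\mbox{d}(f)=0$, whence $f$ is surjective by Lemma~\ref{inj-trans-X}(ii). You instead cancel the injective $f$ on the right in $fgf=f$ to get $fg=1_X$ directly, read off surjectivity of $g$ from that, combine it with injectivity of $g$ to place $g$ in $S(X)$, and conclude $f=g^{-1}$. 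The two arguments pivot on the same fact --- that the inner inverse is forced to be injective --- but yours is self-contained, avoids both the cross-section corollary and the external defect lemma, and identifies $f$ explicitly as the inverse of $g$ rather than merely proving it surjective. The paper's approach has the advantage of reusing a corollary that also drives Proposition~\ref{surj-reg} and the later results on $T(X,\mathcal{P})$, which is presumably why the authors route the argument through it.
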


\begin{proof}
Suppose that $\Omega(X)$ is regular. Since $S(X)$ is the group of units of $\Omega(X)$, we have $S(X) \subseteq \Omega(X)$. For the reverse inclusion, let $f\in \Omega(X)$. Then $\mbox{c}(f) = 0$ by Lemma \ref{inj-trans-X}(i). Since $f\in \reg(\Omega(X))$, there exists $g\in \Omega(X)$ such that $fgf=f$. It follows from Corollary \ref{cros-sec-Tx} that $\mbox{im}(fg)$ is a cross-section of $\ker(f)$. Since $\mbox{c}(f) = 0$ and $\mbox{im}(fg)$ is a cross-section of $\ker(f)$, it follows that $(Xf)g = X$ and therefore $\mbox{d}(fg) = 0$. Since $g$ is injective, we subsequently have $\mbox{d}(f) = 0$ by \cite[Theorem 2.2(iv)]{hig-how-rus98}. Hence $f$ is surjective by Lemma \ref{inj-trans-X}(ii) and consequently $f\in S(X)$.

\vspace{0.1cm}
The converse is immediate.
\end{proof}

\vspace{0.1cm}

The next proposition provides a characterization of regularity of $\Gamma(X)$.

\begin{proposition}\label{surj-reg}
The monoid $\Gamma(X)$ is regular if and only if $\Gamma(X) = S(X)$.
\end{proposition}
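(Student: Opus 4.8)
The plan is to follow the template of the proof of Proposition~\ref{inj-reg}, trading on the duality between injectivity and surjectivity; as it happens, the surjective case turns out to be even more direct. The converse is immediate, since $S(X)$ is a group and therefore regular. For the forward direction I assume $\Gamma(X)$ is regular and show that every $f\in\Gamma(X)$ lies in $S(X)$. Because $S(X)$ is the group of units of $\Gamma(X)$, the inclusion $S(X)\subseteq\Gamma(X)$ is automatic, so only the reverse inclusion needs proof.

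Fix $f\in\Gamma(X)$. By definition $f$ is surjective, so $\mbox{d}(f)=0$ by Lemma~\ref{inj-trans-X}(ii), and it remains only to establish that $f$ is injective. Regularity furnishes some $g\in\Gamma(X)$ with $fgf=f$, whereupon Corollary~\ref{cros-sec-Tx} shows that $\mbox{im}(fg)$ is a cross-section of $\ker(f)$. The key observation is that $f$ and $g$ are both surjective, so $\mbox{im}(fg)=(Xf)g=Xg=X$; that is, the whole set $X$ is a cross-section of $\ker(f)$.

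It then remains to read off injectivity. A cross-section meets each $\ker(f)$-class in exactly one point, so if $X$ itself is a cross-section, every such class is a singleton, i.e.\ $\ker(f)=\Delta(X)$. Hence $f$ is injective, and being also surjective it is a permutation, giving $f\in S(X)$ as desired.

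I do not expect a genuine obstacle. Unlike the injective case of Proposition~\ref{inj-reg}, where one must transport the defect back through $g$ via the cardinality bookkeeping of \cite[Theorem~2.2(iv)]{hig-how-rus98}, here surjectivity of both $f$ and $g$ collapses $\mbox{im}(fg)$ to all of $X$ at once, and a full cross-section forces a trivial kernel with no further work. The one point deserving care is the identity $\mbox{im}(fg)=Xg$, which relies on surjectivity of $f$ (not merely of $g$); this is precisely where the hypothesis $f\in\Gamma(X)$ enters.
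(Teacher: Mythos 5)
Your proof is correct and follows essentially the same route as the paper: regularity gives $g\in\Gamma(X)$ with $fgf=f$, Corollary~\ref{cros-sec-Tx} makes $\mbox{im}(fg)=(Xf)g=Xg=X$ a cross-section of $\ker(f)$, and hence $\ker(f)$ is trivial. The only cosmetic difference is that the paper phrases the last step as $\mbox{c}(f)=0$ together with Lemma~\ref{inj-trans-X}(i), while you argue directly that a full cross-section forces singleton kernel classes.
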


\begin{proof}
Suppose that $\Gamma(X)$ is regular. Since $S(X)$ is the group of units of $\Gamma(X)$, we have $S(X) \subseteq \Gamma(X)$. For the reverse inclusion, let $f\in \Gamma(X)$. Since $f\in \reg(\Gamma(X))$, there exists $g\in \Gamma(X)$ such that $fgf= f$. It follows from Corollary \ref{cros-sec-Tx} that $\mbox{im}(fg)$ is a cross-section of $\ker(f)$. Since $f, g \in \Gamma(X)$, we obtain $\mbox{im}(fg) = (Xf)g= Xg = X$. But $\mbox{im}(fg)$ is a cross-section of $\ker(f)$, we therefore have $\mbox{c}(f) = 0$. Hence $f$ is injective by Lemma \ref{inj-trans-X}(i) and consequently $f\in S(X)$.

\vspace{0.1cm}
The converse is immediate.
\end{proof}

The following corollary is obvious from Proposition \ref{inj-reg} and \ref{surj-reg} .
\begin{corollary}
Let $M = \Omega(X),\; \Gamma(X)$. If $M$ is regular, then $M$ is unit-regular.
\end{corollary}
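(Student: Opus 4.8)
The plan is to reduce the statement to the two propositions just proved, together with the trivial unit-regularity of a group. Suppose $M$ is regular, where $M$ denotes either $\Omega(X)$ or $\Gamma(X)$. By Proposition~\ref{inj-reg} in the case $M = \Omega(X)$, or by Proposition~\ref{surj-reg} in the case $M = \Gamma(X)$, the regularity of $M$ forces $M = S(X)$. Thus the whole corollary rests on the single remaining observation that $S(X)$ is unit-regular.

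That observation is immediate. Since $S(X)$ is a group, every one of its elements is a unit, i.e. $S(X) = U(S(X))$. Using the inclusion $U(M) \subseteq \ureg(M)$ recorded in the preliminaries, we obtain $S(X) = U(S(X)) \subseteq \ureg(S(X)) \subseteq S(X)$, whence $\ureg(S(X)) = S(X)$. Equivalently, for each $x \in S(X)$ the unit $u = x^{-1}$ satisfies $xux = x$, so $x \in \ureg(S(X))$; this realizes the defining inner inverse as a unit directly.

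Combining the two steps, $M = S(X)$ is unit-regular, which is exactly the claim. I anticipate no genuine obstacle here: the entire mathematical content is already carried by Propositions~\ref{inj-reg} and~\ref{surj-reg}, and the closing step is merely the standard fact that every group is a unit-regular monoid. The only care needed is to apply the correct proposition to each of the two choices of $M$, after which both cases collapse to the same group-theoretic conclusion.
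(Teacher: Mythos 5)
Your argument is correct and is precisely the one the paper intends: the corollary is stated as an immediate consequence of Propositions~\ref{inj-reg} and~\ref{surj-reg}, which reduce each case to $M = S(X)$, and unit-regularity of a group is trivial. No issues.
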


\vspace{0.1cm}

\section{Unit-regular elements in $T(X,\mathcal{P})$}
In this section, we characterize unit-regular elements in $T(X,\mathcal{P})$. For finite $X$, recall that the authors \cite[Theorem 3.4]{shubh-c21} characterized unit-regular elements in $T(X, \mathcal{P})$.  We begin by considering an important definition  corresponding to a map of $T(X, \mathcal{P})$ from \cite{puri16}.

\begin{definition} (\cite[p. 220]{puri16})
Let $\mathcal{P}=\{X_i|\;i\in I\}$ be a partition of $X$ and let $f \in T(X,\mathcal{P})$. The \emph{character} of $f$, denoted by $\chi^{(f)}$, is a map $\chi^{(f)}\colon I \to I$ defined by  \[i\chi^{(f)}=j \mbox{ whenever } X_i f\subseteq X_j.\]
\end{definition}

\vspace{0.05cm}

In case of finite $X$, the map $\chi^{(f)}$ has also been studied with the notation $\overline{f}$ (see, e.g., \cite{arjo15, east-c16, east16}). The authors gave a characterization of elements of $\Sigma(X, \mathcal{P})$ and $T_{E^*}(X)$ in terms of $\chi^{(f)}$ in \cite[Theorem 3.2]{shubh-c20} and \cite[Theorem 3.4]{shubh-c20}, respectively.


\vspace{0.2cm}
Note that the monoid $T(I)$ is regular (cf. \cite[p. 63, Exercise 2.6.15]{howie95}), but need not be unit-regular (cf. \cite[Proposition 5]{h-alar80}). We now prove the following lemma.

\begin{lemma}\label{char-unit-reg}
Let $\mathcal{P} = \{X_i|\; i\in I\}$ be a partition of $X$ and let $f\in T(X,\mathcal{P})$. If $f \in \ureg(T(X,\mathcal{P}))$, then $\chi^{(f)}\in \ureg(T(I))$.
\end{lemma}

\begin{proof}
If $f \in \ureg(T(X,\mathcal{P}))$, then there exists $g\in S(X,\mathcal{P})$ such that $fgf=f$. Note that $\chi^{(g)} \in S(I)$ by \cite[Theorem 5.8]{shubh-c20}. We also have from \cite[Lemma 2.3]{puri16} that
$\chi^{(f)}\chi^{(g)}\chi^{(f)} =\chi^{(fgf)} = \chi^{(f)}$.
Hence $\chi^{(f)}\in \ureg(T(I))$.
\end{proof}


Before proving Theorem \ref{char-unit-reg-TXP}, we recall a crucial lemma from \cite{shubh-c20}.

\begin{lemma}\cite[Lemma 5.2]{shubh-c20}\label{fam-func}
Let $\mathcal{P} = \{X_i|\;i\in I\}$ be a partition of $X$ and let $f \in T(X)$. Then $f\in T(X,\mathcal{P})$ if and only if there exists a unique family
\[B(f, I) = \{f_i |\;  f_i \mbox{ is induced by }f\mbox{and } \mbox{dom}(f_i) = X_i\;  \forall i\in I\}.\]
\end{lemma}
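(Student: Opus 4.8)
The plan is to unwind the two defining conditions of $T(X,\mathcal{P})$ against the notion of a map \emph{induced by} $f$. Recall that $g\colon A\to B$ is induced by $f$ precisely when $A\subseteq X$, $Af\subseteq B$, and $xg=xf$ for all $x\in A$; here the blocks $X_i$ play the role of the domains, and the relevant codomains are the blocks of $\mathcal{P}$. Thus the family $B(f,I)$ is nothing but a decomposition of $f$ into block-to-block restrictions, and the lemma asserts that such a decomposition is available (and unambiguous) exactly when $f$ preserves $\mathcal{P}$.

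For the forward implication, suppose $f\in T(X,\mathcal{P})$ and fix $i\in I$. By the definition of $T(X,\mathcal{P})$ there is a block $X_j$ with $X_i f\subseteq X_j$, and I would take $f_i\colon X_i\to X_j$ defined by $xf_i=xf$; this map is induced by $f$ and has $\mbox{dom}(f_i)=X_i$. Carrying this out for every $i\in I$ yields the family $B(f,I)$. For the uniqueness assertion, I would note that since the blocks of $\mathcal{P}$ are pairwise disjoint and each $X_i$ is nonempty, the nonempty image $X_i f$ is contained in exactly one block; hence the target block $X_j$, and therefore the induced map $f_i$ (whose rule $x\mapsto xf$ is in any case forced by $f$), is uniquely determined.

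For the converse, suppose the family $B(f,I)$ exists. For each $i\in I$ the map $f_i$ has domain $X_i$ and its codomain is a block $X_j$, so $X_i f=X_i f_i\subseteq X_j$. Since this holds for every $i\in I$, the transformation $f$ meets the condition $(\forall X_i\in\mathcal{P})(\exists X_j\in\mathcal{P})\ X_i f\subseteq X_j$, that is, $f\in T(X,\mathcal{P})$.

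The only delicate point is the uniqueness claim, and it reduces entirely to the disjointness of the blocks together with $X_i\neq\emptyset$: these two facts guarantee that the block containing the image $X_i f$ cannot be ambiguous, so there is exactly one induced block-to-block map with domain $X_i$. Everything else is a direct translation between the membership condition defining $T(X,\mathcal{P})$ and the existence of the restriction maps $f_i$, so I expect no further obstacle.
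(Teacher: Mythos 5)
Your proof is correct. Note, however, that this paper contains no proof to compare it against: the lemma is imported verbatim from the authors' earlier work, cited as \cite[Lemma 5.2]{shubh-c20}, and is used here as a black box. Your argument is the natural definition-unwinding one would expect, and you correctly isolate the one point that needs care: as literally stated, ``induced by $f$'' allows any codomain containing $X_i f$, so the uniqueness claim only makes sense once the codomains are required to be blocks of $\mathcal{P}$ (which is how $B(f,I)$ is used throughout the paper, e.g.\ ``the map $f_i\in B(f,I)$ from $X_i$ to $X_j$''); under that reading, disjointness of the blocks and $X_i\neq\emptyset$ pin down the target block, hence the map $f_i$, exactly as you argue, and the converse is the immediate translation you give.
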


\vspace{0.1cm}
The following theorem characterizes unit-regular elements in $T(X,\mathcal{P})$.
\begin{theorem}\label{char-unit-reg-TXP}
Let $\mathcal{P} = \{X_i|\; i\in I\}$ be a partition of $X$ and let $f\in T(X,\mathcal{P})$. Then $f \in \ureg(T(X,\mathcal{P}))$ if and only if
\begin{enumerate}
\item $\chi^{(f)}\in \ureg(T(I))$;
\item for each $j\in I\chi^{(f)}$ there exists $i\in I$ such that
\begin{enumerate}
\item[\rm(i)] $|X_i|=|X_j|$,
\item[\rm(ii)] $X_i f = X_j\cap Xf$, and
\item[\rm(iii)] $\mbox{c}(f_i) = \mbox{d}(f_i)$ where $f_i\in B(f, I)$.
\end{enumerate}
\end{enumerate}
\end{theorem}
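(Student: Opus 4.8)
The plan is to prove both directions by splitting the analysis into two coupled layers: the \emph{index layer}, governed by the character $\sigma=\chi^{(f)}\in T(I)$, and the \emph{block layer}, governed by the induced maps $f_i\in B(f,I)$ of Lemma \ref{fam-func}. The bridge between the layers is that a unit $g\in S(X,\mathcal{P})$ amounts to a permutation $\tau=\chi^{(g)}\in S(I)$ together with, for each $i\in I$, a bijection $X_i\to X_{i\tau}$; in particular every $g\in S(X,\mathcal{P})$ maps each block bijectively onto a block (cf. \cite[Theorem 5.8]{shubh-c20}), so $|X_i|=|X_{i\tau}|$ for all $i$, and $\sigma\tau\sigma=\sigma$ whenever $fgf=f$.

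For the forward direction, suppose $fgf=f$ with $g\in S(X,\mathcal{P})$. Condition (1) is immediate from Lemma \ref{char-unit-reg}. For (2), set $\tau=\chi^{(g)}$, so $\sigma\tau\sigma=\sigma$. Given $j\in I\sigma$ I would take $i=j\tau$; from $\sigma\tau\sigma=\sigma$ one checks $i\sigma=j$, so $f_i$ is the induced map $X_i\to X_j$. Then (i) holds because $g$ restricts to a bijection $X_j\to X_{j\tau}=X_i$; (ii) holds because $fgf=f$ forces $ygf=y$ for every $y\in X_j\cap Xf$, exhibiting each such $y$ as lying in $X_if$; and (iii) follows exactly as in Theorem \ref{char-unit-reg-TX}, since $h=g|_{X_j}\colon X_j\to X_i$ satisfies $f_ihf_i=f_i$, whence $\mbox{im}(f_ih)$ is a cross-section of $\ker(f_i)$ by Lemma \ref{cross-section-ker} and transporting $X_j\setminus X_if$ across the bijection $h$ gives $\mbox{c}(f_i)=\mbox{d}(f_i)$.

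For the converse, assume (1) and (2). By (1) together with Theorem \ref{char-unit-reg-TX} applied to $T(I)$, the character $\sigma$ is semi-balanced. For each $j\in I\sigma$ pick a witness $i(j)$ as in (2); since the fibers $\sigma^{-1}(j)$ are disjoint and $i(j)\in\sigma^{-1}(j)$, the set $C=\{i(j):j\in I\sigma\}$ is a cross-section of $\ker(\sigma)$, and by (i) the map $j\mapsto i(j)$ is a cardinality-preserving bijection $I\sigma\to C$. I would define $\tau\in S(I)$ by $j\tau=i(j)$ on $I\sigma$, forcing $\sigma\tau\sigma=\sigma$, and on each block indexed by $j\in I\sigma$ build the bijection $X_j\to X_{i(j)}$ by the converse construction of Theorem \ref{char-unit-reg-TX} (legitimate by (i) and (iii), with (ii) guaranteeing that every $y\in X_j\cap Xf$ has a preimage in $X_{i(j)}$). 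This already yields $ygf=y$ on $\mbox{im}(f)$, hence $fgf=f$. It then remains to extend $\tau$ over $I\setminus I\sigma$ and to define $g$ on the remaining blocks so that $g$ becomes an element of $S(X,\mathcal{P})$.

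The main obstacle is precisely this last extension, namely realizing $g$ as a genuine \emph{partition-preserving bijection of all of $X$}. A unit forces $|X_k|=|X_{k\tau}|$ for \emph{every} $k\in I$, not only for $k\in I\sigma$; so after $\tau$ is pinned down on $I\sigma$ (where (i) supplies the block-size matches) one must still biject $I\setminus I\sigma$ onto $I\setminus C$ \emph{respecting block cardinalities}. The bare cardinalities agree, since semi-balancedness gives $|I\setminus I\sigma|=\mbox{d}(\sigma)=\mbox{c}(\sigma)=|I\setminus C|$; but for infinite $X$ the block-by-block matching is not automatic from cardinality alone, because infinite block-size multiplicities cannot simply be cancelled. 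The crux is therefore to verify that (1) and (2) together supply a cardinality-preserving bijection $I\setminus I\sigma\to I\setminus C$, handling each block-size class separately and exploiting the semi-balancedness of $\chi^{(f)}$; this cardinal bookkeeping over $I\setminus I\sigma$, rather than either the index-layer argument or the block-layer application of Theorem \ref{char-unit-reg-TX}, is the step I expect to require the most care.
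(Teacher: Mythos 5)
Your forward direction is correct and is essentially the paper's own argument: take $i=j\chi^{(g)}$, get (i) because units of $T(X,\mathcal{P})$ carry blocks bijectively onto blocks, (ii) from $fgf=f$, and (iii) by applying Lemma~\ref{cross-section-ker} to the identity $f_ig_jf_i=f_i$. The genuine gap is in the converse: you stop at exactly the step you call the crux --- extending $\tau$ over $I\setminus I\sigma$ by a cardinality-preserving bijection onto $I\setminus C$ --- and never carry it out, so the proposal is not a complete proof. Worse, that step cannot be carried out: conditions (1) and (2) do \emph{not} supply such a bijection when $X$ is infinite and the blocks are non-uniform, so the bookkeeping you defer is not merely delicate but impossible in general.

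Concretely, take $I=\{a_n\mid n\ge 0\}\cup\{b_n\mid n\ge 0\}$ with $X_{a_n}=\{u_n\}$ (size $1$) and $X_{b_n}=\{v_n,w_n\}$ (size $2$), and define $f$ by $u_0f=u_0$, $u_nf=u_{n+1}$ $(n\ge 1)$, $v_0f=w_0f=u_0$, and $v_nf=v_{n-1}$, $w_nf=w_{n-1}$ $(n\ge 1)$. Then $\chi^{(f)}$ fixes $a_0$ and sends $a_n\mapsto a_{n+1}$ $(n\ge 1)$, $b_0\mapsto a_0$, $b_n\mapsto b_{n-1}$ $(n\ge 1)$; it is semi-balanced with $\mbox{c}(\chi^{(f)})=\mbox{d}(\chi^{(f)})=1$, so (1) holds by Theorem~\ref{char-unit-reg-TX}, and (2) holds with witnesses $i(a_0)=a_0$, $i(a_{n+1})=a_n$, $i(b_m)=b_{m+1}$, each of the relevant induced block maps being a bijection, so (i)--(iii) are immediate. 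Yet $f\notin\ureg(T(X,\mathcal{P}))$: if $g\in S(X,\mathcal{P})$ satisfied $fgf=f$, then by your forward-direction analysis $j\chi^{(g)}$ must be a witness for each $j\in I\chi^{(f)}$, which forces $a_0\chi^{(g)}=a_0$ (it must lie in the fibre $\{a_0,b_0\}$ and index a size-$1$ block), $a_{n+1}\chi^{(g)}=a_n$ and $b_m\chi^{(g)}=b_{m+1}$ (singleton fibres); the permutation $\chi^{(g)}$ must then send the single remaining index $a_1$ to the single remaining index $b_0$, which is impossible since a unit carries the one-point block $X_{a_1}$ bijectively onto the two-point block $X_{b_0}$. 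So the equivalence you were asked to prove actually fails for infinite $X$. Note that the paper's own converse has the same hole: the second clause of its definition of $\chi^{(g)}$ simply posits, for each $j\in I\setminus I\chi^{(f)}$, an $i\in I\setminus T_{\chi^{(f)}}$ with $|X_i|=|X_j|$, without justification, and the example above shows no consistent such choice need exist. For finite $X$ the step is recoverable --- by (2)(i) the multiset of block sizes over the witness set equals that over $I\chi^{(f)}$, so the complementary multisets coincide and can be matched --- which is exactly why the block-size-class bookkeeping you propose works in the finite case but cannot succeed in the infinite one.
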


\begin{proof}
Suppose first that $f \in \ureg(T(X,\mathcal{P}))$. Then there exists $g\in S(X,\mathcal{P})$ such that $fgf = f$.
\begin{enumerate}
\item Lemma \ref{char-unit-reg} takes care of this part.

\vspace{0.1cm}
\item Let $j\in I\chi^{(f)}$ and let $j\chi^{(g)}=i$.

\vspace{0.1cm}
\begin{enumerate}
\item[\rm(i)] Since $g\in S(X,\mathcal{P})$ and $j\chi^{(g)}=i$, we get $|X_i| = |X_j|$ by \cite[Lemma 3.6(ii)]{shubh-c20}.

\vspace{0.1cm}
\item[\rm(ii)] We first show that $X_if \subseteq X_j\cap Xf$. Since $X_if\subseteq X f$, it suffices to prove that $X_if\subseteq X_j$. Since $j\in I\chi^{(f)}$,
there exists $k\in I$ such that $k\chi^{(f)} = j$ and so $X_kf\subseteq X_j$ by definition of $\chi^{(f)}$. Since $j\chi^{(g)}=i$, we also have $X_jg = X_i$ by definition of $\chi^{(g)}$ and \cite[Lemma 3.6]{shubh-c20}. Then
$X_k f = (X_kf)gf\subseteq (X_jg)f=X_if$.
This implies that $X_k f \subseteq X_j \cap X_i f$. Since $f$ preserves $\mathcal{P}$, we thus get $X_if\subseteq X_j$.

\vspace{0.1cm}
For the reverse inclusion, let $y\in X_j\cap Xf$. Since $y\in Xf$, there exists $x\in X$ such that $xf=y$. Also $y\in X_j$ and $j\chi^{(g)}=i$, we see that $yg\in X_i$. Then
$y = xf = (xf)gf=(yg)f \in X_if$. Hence $X_j\cap Xf\subseteq X_if$ as required.

\vspace{0.1cm}
\item[\rm(iii)] Since $j\chi^{(g)}=i$, we have from \cite[Lemma 3.6]{shubh-c20} that $X_jg=X_i$ and so  $X_jg_j=X_i$ where $g_j\in B(g,I)$.
Also, by (ii), we have $X_if=X_j\cap Xf$ and so $X_if_i\subseteq X_j$ where $f_i\in B(f,I)$. Since $fgf=f$, we see that $f_ig_jf_i=f_i$.
Then $(X_if_i)g_j \subseteq X_i$ is a cross-section of $\ker(f_i)$ by Lemma \ref{cross-section-ker}. Write $(X_if_i)g_j = T_{f_i}$. Since $g\in S(X,\mathcal{P})$, the map $g_j\in B(g, I)$ is bijective by \cite[Theorem 5.8]{shubh-c20}. Therefore $(X_j\setminus X_if_i)g_j= X_jg_j\setminus (X_if_i)g_j=X_i\setminus T_{f_i}$ and hence $\mbox{c}(f_i) = \mbox{d}(f_i)$.
\end{enumerate}
\end{enumerate}

\vspace{0.1cm}
Conversely, suppose that the given conditions hold. By (2), let $T_{\chi^{(f)}} \subseteq I$ be a cross-section of $\ker(\chi^{(f)})$ such that the preimage $j(\chi^{(f)})^{-1}\cap T_{\chi^{(f)}}$ of every element $j\in I\chi^{(f)}$ satisfies all the three conditions of (2). By (1), let $\chi^{(g)}\in S(I)$ be the following bijection that satisfies $\chi^{(f)}\chi^{(g)}\chi^{(f)} = \chi^{(f)}$.

\begin{equation*}
j\chi^{(g)}=
\begin{cases}
i\in T_{\chi^{(f)}} & \text{if $j\in I\chi^{(f)}$ and $i\chi^{(f)} = j$},\\
i\in I\setminus T_{\chi^{(f)}} & \text{if $j\in I\setminus I\chi^{(f)}$ and $|X_i|=|X_j|$}.
\end{cases}
\end{equation*}

\vspace{0.1cm}
Using the above bijection $\chi^{(g)}$, we now construct a bijection of $S(X, \mathcal{P})$ whose character is equal to $\chi^{(g)}$.

\vspace{0.1cm}
Let $j\in I\chi^{(f)}$. Then there exists $i\in T_{\chi^{(f)}}$ such that $i\chi^{(f)} = j$. By (2), we then have $|X_i|=|X_j|$, $X_j\cap Xf=X_if$, and $\mbox{c}(f_i) = \mbox{d}(f_i)$ where $f_i\in B(f, I)$. Therefore, we can choose a bijection $g_j\colon X_j\to X_i$ such that $yg_j=x$ whenever $x \in T_{f_i}$ and $xf_i = y$.

\vspace{0.0cm}
Let $j\in I\setminus I\chi^{(f)}$. Then we arbitrarily choose a bijection $h_j\colon X_j\to X_{j\chi^{(g)}}$. Using these bijections, we define a map $g\colon X\to X$ by
\begin{equation*}
xg=
\begin{cases}
xg_j & \text{if $x\in X_j$ where $j\in I\chi^{(f)}$},\\
xh_j & \text{if $x\in X_j$ where $j\in I\setminus I\chi^{(f)}$}.
\end{cases}
\end{equation*}

Since $\chi^{(g)}\in S(I)$ and each map of $B(g,I)$ is bijective, it follows from \cite[Theorem 5.8]{shubh-c20} that $g\in S(X,\mathcal{P})$. We can also verify in a routine manner that $fgf = f$. Hence $f \in \ureg(T(X,\mathcal{P}))$.
\end{proof}

\section{Unit-regular elements in $\Sigma(X, \mathcal{P})$, $\Gamma(X, \mathcal{P})$, and $T_{E^*}(X)$}

In this section, we first give a characterization of a regular element $f$ in $\Sigma(X,\mathcal{P})$ in term of its character $\chi^{(f)}$. We then characterize unit-regular elements in $\Sigma(X,\mathcal{P})$, $\Gamma(X, \mathcal{P})$, and $T_{E^*}(X)$, respectively.

\begin{proposition}\label{reg-sigma-char}
Let $\mathcal{P} = \{X_i|\;i\in I\}$ be a partition of $X$ and let $f\in \Sigma(X,\mathcal{P})$. Then $f \in \reg(\Sigma(X,\mathcal{P}))$ if and only if $\chi^{(f)} \in \reg(\Gamma(I))$.
\end{proposition}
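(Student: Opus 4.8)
The plan is to move everything to the level of the character map and use that, by \cite[Theorem 3.2]{shubh-c20}, an element $f$ of $T(X,\mathcal{P})$ lies in $\Sigma(X,\mathcal{P})$ exactly when $\chi^{(f)}$ is surjective; equivalently, $\chi^{(f)}$ hits $j$ iff some block maps into $X_j$ iff $X_j\cap Xf\neq\emptyset$. Thus for $f\in\Sigma(X,\mathcal{P})$ the membership $\chi^{(f)}\in\Gamma(I)$ is automatic, and likewise any inner inverse I produce will have surjective character; so the real content is matching the regularity itself.

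For the forward implication I would argue directly. Suppose $f\in\reg(\Sigma(X,\mathcal{P}))$, so $fgf=f$ for some $g\in\Sigma(X,\mathcal{P})$. By the multiplicativity of characters \cite[Lemma 2.3]{puri16}, $\chi^{(f)}\chi^{(g)}\chi^{(f)}=\chi^{(fgf)}=\chi^{(f)}$. Since $g\in\Sigma(X,\mathcal{P})$ gives $\chi^{(g)}\in\Gamma(I)$, the map $\chi^{(g)}$ is a surjective inner inverse of $\chi^{(f)}$, whence $\chi^{(f)}\in\reg(\Gamma(I))$. This direction is routine.

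The substance is the converse, and the main obstacle is visible immediately: to build an inner inverse $g\in\Sigma(X,\mathcal{P})$ I must, for $fgf=f$, send each $y\in Xf$ to an $f$-preimage, but $g$ must also preserve $\mathcal{P}$, so the whole set $X_j\cap Xf$ is forced into a single block $X_i$; this is possible only when $X_j\cap Xf\subseteq X_if$, which can fail if several blocks map into $X_j$. The key that removes this obstruction is that regularity of $\chi^{(f)}$ in $\Gamma(I)$ is secretly much stronger: if $\chi^{(f)}\sigma\chi^{(f)}=\chi^{(f)}$ with $\sigma\in\Gamma(I)$, then by Corollary \ref{cros-sec-Tx} the set $\mbox{im}(\chi^{(f)}\sigma)$ is a cross-section of $\ker(\chi^{(f)})$; but $\chi^{(f)}$ and $\sigma$ are both surjective, so $\mbox{im}(\chi^{(f)}\sigma)=(I\chi^{(f)})\sigma=I$, forcing every class of $\ker(\chi^{(f)})$ to be a singleton. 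Hence $\chi^{(f)}$ is injective, and being already surjective it is a bijection of $I$ (so in fact $\reg(\Gamma(I))=S(I)$, mirroring Proposition \ref{surj-reg}).

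Once $\chi^{(f)}$ is a bijection the difficulty disappears. Each block $X_j$ receives the image of exactly one block, namely $X_{j(\chi^{(f)})^{-1}}$, and $X_j\cap Xf=X_{j(\chi^{(f)})^{-1}}f$. Using the family $B(f,I)$ of Lemma \ref{fam-func}, I then define $g$ block by block: for each $j$ map $X_j$ into $X_{j(\chi^{(f)})^{-1}}$ by sending each $y\in X_j\cap Xf$ to a chosen preimage in $X_{j(\chi^{(f)})^{-1}}$ (which exists since $y\in X_{j(\chi^{(f)})^{-1}}f$) and sending the remaining points of $X_j$ arbitrarily into the same block. By construction $\chi^{(g)}=(\chi^{(f)})^{-1}$ is a bijection, so $g$ preserves $\mathcal{P}$ and its image meets every block, giving $g\in\Sigma(X,\mathcal{P})$; and since $yg\in yf^{-1}$ for every $y\in Xf$, a one-line computation yields $xfgf=(xf)gf=xf$, i.e. $fgf=f$. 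I expect the only delicate points to be verifying that $\chi^{(g)}$ is surjective (so that $g$ lies in $\Sigma(X,\mathcal{P})$ and not merely in $T(X,\mathcal{P})$) and the bookkeeping confirming that the chosen preimages land in the prescribed blocks.
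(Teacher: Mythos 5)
Your proposal is correct and takes essentially the same approach as the paper: the forward direction via multiplicativity of characters, and the converse by first showing that regularity of $\chi^{(f)}$ in $\Gamma(I)$ forces $\chi^{(f)}\in S(I)$ (your cross-section argument is precisely the proof of Proposition~\ref{surj-reg}, which the paper simply cites) and then constructing $g$ block by block so that each point of $X_j\cap Xf$ goes to a chosen $f$-preimage in $X_{j(\chi^{(f)})^{-1}}$. The only cosmetic difference is that the paper sends the points of $X_j\setminus Xf$ to a single fixed element of that block rather than arbitrarily into it.
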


\begin{proof}
Suppose first that $f \in \reg(\Sigma(X,\mathcal{P}))$. Then there exists $g\in \Sigma(X,\mathcal{P})$ such that $fgf = f$. Therefore \[\chi^{(f)} = \chi^{(fgf)} = \chi^{(f)} \chi^{(g)} \chi^{(f)}\] by \cite[Lemma 2.3]{puri16}. Since $f, g \in \Sigma(X,\mathcal{P})$, we have $\chi^{(f)}, \chi^{(g)} \in \Gamma(I)$ by Theorem \cite[Theorem 3.4]{shubh-c20} and so $\chi^{(f)} \in \reg(\Gamma(I))$.	
	
\vspace{0.1cm}
Conversely, suppose that $\chi^{(f)}\in \reg(\Gamma(I))$. Then there exists $h\in \Gamma(I)$ such that $\chi^{(f)} h \chi^{(f)} = \chi^{(f)}$. Since $\chi^{(f)}\in \reg(\Gamma(I))$, it follows from Proposition \ref{surj-reg} that $\chi^{(f)}\in S(I)$ and so $h=(\chi^{(f)})^{-1}$.
	
\vspace{0.1cm}
For each $i\in I$, let $x_i$ be a fixed element of $X_i$. Moreover, for each $x\in Xf$, let $x'$ be a fixed element of $x f^{-1}$. Define a map $g\colon X \to X$ by
\begin{equation*}
xg=
\begin{cases}
x' & \text{if $x\in X_i\cap Xf$},\\
x_{i(\chi^{(f)})^{-1}} & \text{if $x\in X_i \setminus Xf$}.
\end{cases}
\end{equation*}
	
Observe that $g\in T(X, \mathcal{P})$ and $\chi^{(g)}=h \in \Gamma(I)$. Therefore $g\in \Sigma(X,\mathcal{P})$ by \cite[Theorem 3.2]{shubh-c20}. Since $g$ maps each element $x$ of $Xf$ to a fixed element $x'$ of $xf^{-1}$, we can verify in a routine manner that $fgf=f$. Hence $f\in\reg(\Sigma(X,\mathcal{P}))$.
\end{proof}

We now have the following corollary from Proposition \ref{surj-reg} and Proposition \ref{reg-sigma-char}.

\begin{corollary}\label{fInSig-charIsBij}
Let $\mathcal{P} = \{X_i|\; i\in I\}$ be a partition of $X$ and let $f\in \Sigma(X,\mathcal{P})$. If $f \in \mbox{ureg}(\Sigma(X,\mathcal{P}))$, then $\chi^{(f)} \in S(I)$.
\end{corollary}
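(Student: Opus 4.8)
The plan is to reduce the statement to the two cited results together with the elementary containment $\ureg(M)\subseteq\reg(M)$ recorded in the preliminaries; no substantial new argument is needed.

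First I would note that since $f\in\ureg(\Sigma(X,\mathcal{P}))$ and unit-regular elements are in particular regular, we have $f\in\reg(\Sigma(X,\mathcal{P}))$. Applying Proposition \ref{reg-sigma-char} then yields at once that $\chi^{(f)}\in\reg(\Gamma(I))$. This disposes of the first half of the work and leaves only the task of upgrading regularity of $\chi^{(f)}$ in $\Gamma(I)$ to membership in $S(I)$.

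For that last step I would invoke Proposition \ref{surj-reg}. Although it is phrased as a statement about the whole monoid $\Gamma(I)$, its proof in fact proceeds element-wise and shows that \emph{any} regular element of $\Gamma(I)$ is forced to be a permutation: if $\chi^{(f)}$ has an inner inverse $h\in\Gamma(I)$, then $\mbox{im}(\chi^{(f)}h)$ is a cross-section of $\ker(\chi^{(f)})$ by Corollary \ref{cros-sec-Tx}, while surjectivity of both $\chi^{(f)}$ and $h$ gives $\mbox{im}(\chi^{(f)}h)=(I\chi^{(f)})h=Ih=I$. A cross-section of $\ker(\chi^{(f)})$ equal to all of $I$ means $\ker(\chi^{(f)})=\Delta(I)$, so $\chi^{(f)}$ is injective and hence bijective. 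Therefore $\chi^{(f)}\in S(I)$, as claimed.

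I expect no genuine obstacle, as the result is a formal consequence of what precedes it. The only point deserving a little care is to read Proposition \ref{surj-reg} element-wise, rather than as the blanket assertion that $\Gamma(I)$ is regular (which is not among the hypotheses). This is exactly the reading already employed in the converse direction of the proof of Proposition \ref{reg-sigma-char}, so the reasoning is entirely consistent with the surrounding development.
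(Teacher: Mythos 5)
Your proposal is correct and follows exactly the paper's own route: pass from unit-regularity to regularity, apply Proposition \ref{reg-sigma-char} to get $\chi^{(f)}\in\reg(\Gamma(I))$, and then conclude $\chi^{(f)}\in S(I)$ via Proposition \ref{surj-reg}. Your explicit element-wise unpacking of Proposition \ref{surj-reg} is a sound (and slightly more careful) rendering of the same step the paper takes implicitly, both here and in the converse direction of Proposition \ref{reg-sigma-char}.
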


\begin{proof}
If $f \in \ureg(\Sigma(X,\mathcal{P}))$, then  $f \in \reg(\Sigma(X,\mathcal{P}))$. Therefore $\chi^{(f)} \in \reg(\Gamma(I))$ by Proposition \ref{reg-sigma-char} and so $\chi^{(f)} \in S(I)$ by Proposition \ref{surj-reg}.
\end{proof}

Let us now make the following trivial remark which will be used in the sequel.

\begin{remark}\label{grp-unit-submon}
Let $M$ be a monoid and let $N$ be a submonoid of $M$. Then
\begin{enumerate}
\item[\rm(i)] $\mbox{ureg}(N) \subseteq \mbox{ureg}(M)$.
\item[\rm(ii)] if $U(M) \subseteq N$, then $N \cap \mbox{ureg}(M) \subseteq \mbox{ureg}(N)$.
\end{enumerate}
\end{remark}

Recall that, for finite $X$, the authors \cite[Proposition 3.5]{shubh-c21} gave a characterization of unit-regular elements in $\Sigma(X, \mathcal{P})$.
The following proposition characterizes unit-regular elements in $\Sigma(X,\mathcal{P})$ for arbitrary $X$.

\begin{proposition}
Let $\mathcal{P} = \{X_i|\; i\in I\}$ be a partition of $X$ and let $f\in \Sigma(X,\mathcal{P})$. Then $f \in \mbox{ureg}(\Sigma(X,\mathcal{P}))$ if and only if
\begin{enumerate}
\item[\rm(i)] $\chi^{(f)} \in \Omega(I)$,
\item[\rm(ii)] $|X_i|=|X_j|$ whenever $i\chi^{(f)}=j$, and $\mbox{c}(f_i) = \mbox{d}(f_i)$ where $f_i\in B(f, I)$.
\end{enumerate}
\end{proposition}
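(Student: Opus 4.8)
The plan is to derive this proposition directly from the characterization of $\ureg(T(X,\mathcal{P}))$ already obtained in Theorem \ref{char-unit-reg-TXP}, exploiting the fact that $\Sigma(X,\mathcal{P})$ and $T(X,\mathcal{P})$ share the same group of units. First I would observe that the group of units $S(X,\mathcal{P})$ of $T(X,\mathcal{P})$ is contained in $\Sigma(X,\mathcal{P})$: a partition-preserving permutation is surjective, so its image meets every block. Taking $M=T(X,\mathcal{P})$ and $N=\Sigma(X,\mathcal{P})$ in Remark \ref{grp-unit-submon}, part (i) gives $\ureg(\Sigma(X,\mathcal{P}))\subseteq\Sigma(X,\mathcal{P})\cap\ureg(T(X,\mathcal{P}))$ and part (ii) gives the reverse inclusion, whence
\[\ureg(\Sigma(X,\mathcal{P}))=\Sigma(X,\mathcal{P})\cap\ureg(T(X,\mathcal{P})).\]
It therefore suffices to show that, for $f\in\Sigma(X,\mathcal{P})$, conditions (1) and (2) of Theorem \ref{char-unit-reg-TXP} together are equivalent to (i) and (ii).

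For the forward direction, suppose $f\in\ureg(\Sigma(X,\mathcal{P}))$. Corollary \ref{fInSig-charIsBij} gives $\chi^{(f)}\in S(I)$, so in particular $\chi^{(f)}\in\Omega(I)$, which is (i). Because $\chi^{(f)}$ is a bijection and $I\chi^{(f)}=I$, the clause ``for each $j\in I\chi^{(f)}$ there exists $i\in I$'' of Theorem \ref{char-unit-reg-TXP}(2) becomes a statement about the unique preimage $i=j(\chi^{(f)})^{-1}$; reading it as ``whenever $i\chi^{(f)}=j$'' turns conditions (2)(i) and (2)(iii) into exactly (ii). Conversely, assume (i) and (ii). Since $f\in\Sigma(X,\mathcal{P})$, the character $\chi^{(f)}$ is surjective by \cite[Theorem 3.4]{shubh-c20}, and (i) makes it injective, so $\chi^{(f)}\in S(I)\subseteq\ureg(T(I))$, giving Theorem \ref{char-unit-reg-TXP}(1). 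Condition (ii) supplies (2)(i) and (2)(iii) for the unique $i=j(\chi^{(f)})^{-1}$, so only (2)(ii), namely $X_if=X_j\cap Xf$, remains to be checked.

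The one point that is not mere bookkeeping, and the step I expect to be the crux, is that (2)(ii) is automatic once $\chi^{(f)}$ is a bijection. Here I would argue as follows: fix $j\in I$ and let $i=j(\chi^{(f)})^{-1}$, so that $X_if\subseteq X_j$. For every block $X_k$ with $k\neq i$ we have $k\chi^{(f)}\neq j$ by injectivity of $\chi^{(f)}$, whence $X_kf\subseteq X_{k\chi^{(f)}}$ is disjoint from $X_j$. Therefore
\[X_j\cap Xf=X_j\cap\bigcup_{k\in I}X_kf=X_j\cap X_if=X_if,\]
establishing (2)(ii). With all three parts of Theorem \ref{char-unit-reg-TXP}(2) in hand together with (1), we conclude $f\in\ureg(T(X,\mathcal{P}))$, and since $f\in\Sigma(X,\mathcal{P})$ the displayed identity gives $f\in\ureg(\Sigma(X,\mathcal{P}))$. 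All remaining verifications are routine.
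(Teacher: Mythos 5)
Your proposal is correct and follows essentially the same route as the paper: both reduce to Theorem \ref{char-unit-reg-TXP} via Remark \ref{grp-unit-submon} and Corollary \ref{fInSig-charIsBij}, and both obtain $\chi^{(f)}\in S(I)$ in the converse by intersecting $\Gamma(I)$ with $\Omega(I)$. Your only deviations are cosmetic: you spell out why $X_if=X_j\cap Xf$ follows from injectivity of $\chi^{(f)}$ (the paper merely asserts it), and the surjectivity of $\chi^{(f)}$ for $f\in\Sigma(X,\mathcal{P})$ should be cited to \cite[Theorem 3.2]{shubh-c20} rather than Theorem 3.4.
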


\begin{proof}
Suppose first that $f \in \mbox{ureg}(\Sigma(X,\mathcal{P}))$.
\begin{enumerate}
\item[\rm(i)] Then $\chi^{(f)} \in S(I)$ by Corollary \ref{fInSig-charIsBij}. Since $S(I) \subseteq \Omega(I)$, it follows that $\chi^{(f)} \in \Omega(I)$.
	
\vspace{0.1cm}
\item[\rm(ii)] Let $i\chi^{(f)}=j$. Since $f \in \mbox{ureg}(\Sigma(X,\mathcal{P}))$ and $\Sigma(X,\mathcal{P}) \subseteq T(X,\mathcal{P})$, we have $f \in \mbox{ureg}(T(X,\mathcal{P}))$ and $\chi^{(f)}\in S(I)$ by Remark \ref{grp-unit-submon}(i) and by Corollary \ref{fInSig-charIsBij}, respectively. Then, by Theorem \ref{char-unit-reg-TXP}, we get $|X_i|=|X_j|$ and $\mbox{c}(f_i) = \mbox{d}(f_i)$ where $f_i\in B(f, I)$.
\end{enumerate}
	
\vspace{0.1cm}
Conversely, suppose that the given conditions hold. Since $S(X, \mathcal{P}) \subseteq \Sigma(X, \mathcal{P}) \subseteq T(X,\mathcal{P})$, by Remark \ref{grp-unit-submon}(ii), it suffices to show that $f \in \mbox{ureg}(T(X,\mathcal{P}))$.

\vspace{0.1cm}
Since $f\in \Sigma(X,\mathcal{P})$, we have $\chi^{(f)}\in \Gamma(I)$ by \cite[Theorem 3.2]{shubh-c20}.  By (i), we know that  $\chi^{(f)} \in \Omega(I)$ and so  $\chi^{(f)} \in \Gamma(I)\cap \Omega(I) = S(I)$. This implies that $\chi^{(f)}\in \ureg(T(I))$.

\vspace{0,1cm}
Let $i\chi^{(f)}=j$. Then $X_i f \subseteq X_j$ by definition of $\chi^{(f)}$. Recall that $\chi^{(f)}\in S(I)$. Therefore $X_i f = X_j \cap Xf$. By (ii), we also have $|X_i| = |X_j|$ and $\mbox{c}(f_i) = \mbox{d}(f_i)$ where $f_i\in B(f, I)$. Hence $f\in \ureg(T(X, \mathcal{P}))$ by Theorem \ref{char-unit-reg-TXP} as required.
\end{proof}

\vspace{0.1cm}

For finite $X$, it is proven in \cite[Proposition 5.8]{shubh-c21} that $\reg(\Gamma(X, \mathcal{P})) = \ureg(\Gamma(X, \mathcal{P}))$. The following proposition provides a characterization of unit-regular elements in $\Gamma(X, \mathcal{P})$ for arbitrary $X$.

\begin{proposition}
Let $\mathcal{P} = \{X_i|\; i\in I\}$ be a partition of $X$ and let $f\in \Gamma(X,\mathcal{P})$. Then $f \in \ureg(\Gamma(X,\mathcal{P}))$ if and only if
\begin{enumerate}
\item[\rm(i)] $\chi^{(f)}\in \ureg(T(I))$,
\item[\rm(ii)] for each $j\in I\chi^{(f)}$ there exists $i\in I$ such that the map $f_i\in B(f,I)$ from  $X_i$ to $X_j$ is injective.
\end{enumerate}
\end{proposition}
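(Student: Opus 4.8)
The plan is to reduce the whole statement to Theorem \ref{char-unit-reg-TXP} by exploiting the one extra feature of $\Gamma(X,\mathcal{P})$: every block is mapped \emph{onto} a block. First I would record the structural facts I shall use throughout. The group of units of $T(X,\mathcal{P})$ is $S(X,\mathcal{P})$, and since a partition-preserving permutation maps each block bijectively onto a block (so $X_ig=X_{i\chi^{(g)}}$), we have $S(X,\mathcal{P})\subseteq \Gamma(X,\mathcal{P})$; in particular $U(T(X,\mathcal{P}))\subseteq \Gamma(X,\mathcal{P})$. Moreover, for $f\in\Gamma(X,\mathcal{P})$ one has $X_kf=X_{k\chi^{(f)}}$ for every $k\in I$, whence $Xf=\bigcup_{j\in I\chi^{(f)}}X_j$. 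Consequently, for every $j\in I\chi^{(f)}$ the block $X_j$ is contained in $Xf$, so that $X_j\cap Xf=X_j$.

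For the forward direction, suppose $f\in\ureg(\Gamma(X,\mathcal{P}))$. Since $\Gamma(X,\mathcal{P})$ is a submonoid of $T(X,\mathcal{P})$, Remark \ref{grp-unit-submon}(i) gives $f\in\ureg(T(X,\mathcal{P}))$, so both conditions of Theorem \ref{char-unit-reg-TXP} hold. Condition (1) there is exactly our (i). For (ii), I would fix $j\in I\chi^{(f)}$ and take the index $i$ supplied by Theorem \ref{char-unit-reg-TXP}(2); using $X_j\cap Xf=X_j$ the condition $X_if=X_j\cap Xf$ becomes $X_if=X_j$, which forces $i\chi^{(f)}=j$ (by disjointness of blocks, since $X_if\subseteq X_{i\chi^{(f)}}$ and $X_if=X_j\neq\emptyset$) and shows $f_i\colon X_i\to X_j$ is surjective, i.e. $\mbox{d}(f_i)=0$. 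Then condition (2)(iii) forces $\mbox{c}(f_i)=\mbox{d}(f_i)=0$, so $f_i$ is injective by Lemma \ref{inj-trans-X}(i). This establishes (ii).

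For the converse, assume (i) and (ii). Because $U(T(X,\mathcal{P}))=S(X,\mathcal{P})\subseteq\Gamma(X,\mathcal{P})$, Remark \ref{grp-unit-submon}(ii) reduces the claim to proving $f\in\ureg(T(X,\mathcal{P}))$, i.e. verifying the two conditions of Theorem \ref{char-unit-reg-TXP}. Condition (1) is our (i). For condition (2), I would fix $j\in I\chi^{(f)}$ and take the $i$ from (ii) with $f_i\colon X_i\to X_j$ injective; here $i\chi^{(f)}=j$, so $X_if=X_j$ because $f\in\Gamma(X,\mathcal{P})$. Thus $f_i$ is a bijection of $X_i$ onto $X_j$, giving $|X_i|=|X_j|$ (condition (2)(i)), $X_if=X_j=X_j\cap Xf$ (condition (2)(ii)), and $\mbox{c}(f_i)=\mbox{d}(f_i)=0$ (condition (2)(iii)). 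Hence $f\in\ureg(T(X,\mathcal{P}))$, and therefore $f\in\ureg(\Gamma(X,\mathcal{P}))$.

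I do not anticipate a real obstacle: the entire content is that membership in $\Gamma(X,\mathcal{P})$ makes the defect $\mbox{d}(f_i)$ vanish on the relevant blocks, so that the balance condition $\mbox{c}(f_i)=\mbox{d}(f_i)$ of Theorem \ref{char-unit-reg-TXP} collapses to injectivity of $f_i$, while simultaneously $X_j\cap Xf$ simplifies to $X_j$. The only point deserving care is the bookkeeping that the index $i$ furnished in each direction really satisfies $i\chi^{(f)}=j$, so that the induced map $f_i$ genuinely has codomain $X_j$; this follows at once from the disjointness of the blocks together with $X_if=X_j\neq\emptyset$.
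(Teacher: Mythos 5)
Your proof is correct, and it follows the same overall strategy as the paper: both directions are routed through Theorem \ref{char-unit-reg-TXP}, with Remark \ref{grp-unit-submon}(i) giving $f\in\ureg(T(X,\mathcal{P}))$ in the forward direction and Remark \ref{grp-unit-submon}(ii) (using $S(X,\mathcal{P})\subseteq\Gamma(X,\mathcal{P})$) closing the converse. The one genuine difference is in how you obtain condition (ii) of the forward direction. The paper observes that $f\in\ureg(\Gamma(X,\mathcal{P}))$ implies $f\in\reg(\Gamma(X,\mathcal{P}))$ and then cites the external characterization of regular elements in $\Gamma(X,\mathcal{P})$ from \cite[Theorem 5.5]{shubh-c21} to conclude that some $f_i\colon X_i\to X_j$ is injective. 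You instead extract injectivity directly from the three conditions of Theorem \ref{char-unit-reg-TXP}(2): since $X_j\subseteq Xf$ for $j\in I\chi^{(f)}$ (because $f$ maps blocks \emph{onto} blocks), the condition $X_if=X_j\cap Xf$ collapses to $X_if=X_j$, so $\mbox{d}(f_i)=0$, and the balance condition $\mbox{c}(f_i)=\mbox{d}(f_i)$ then forces $\mbox{c}(f_i)=0$, i.e.\ injectivity via Lemma \ref{inj-trans-X}(i). This makes your argument self-contained within the present paper, at the cost of redoing a small amount of work that the cited result packages; the paper's route is shorter on the page but depends on the companion paper. Your care in checking that the supplied index $i$ actually satisfies $i\chi^{(f)}=j$ (via disjointness of blocks and $X_if=X_j\neq\emptyset$), so that $f_i$ really has codomain $X_j$, is a point the paper glosses over. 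The converse directions are essentially identical, differing only in that you derive surjectivity of $f_i$ from the definition of $\Gamma(X,\mathcal{P})$ rather than citing \cite[Remark 5.3]{shubh-c21}.
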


\begin{proof}
Suppose first that $f \in \ureg(\Gamma(X,\mathcal{P}))$. Then  $f \in \ureg(T(X,\mathcal{P}))$ by Remark \ref{grp-unit-submon}(i).
\begin{enumerate}
\item[\rm(i)] Then Lemma \ref{char-unit-reg} takes care of this part.
		
\vspace{0.1cm}
\item[\rm(ii)] Clearly $f\in \reg(\Gamma(X,\mathcal{P}))$. Hence, by Theorem \cite[Theorem 5.5]{shubh-c21}, for each $j\in I\chi^{(f)}$ there exists $i\in I$ such that the map $f_i\in B(f, I)$ from $X_i$ to $X_j$ is injective.
\end{enumerate}
	
\vspace{0.1cm}
Conversely, suppose that the given conditions hold. Since $f\in \Gamma(X,\mathcal{P})$, we know from \cite[Remark 5.3]{shubh-c21} that every map of $B(f, I)$ is surjective. It follows from (ii) that for each $j\in I\chi^{(f)}$ there exists $i\in I$ such that the map $f_i \in B(f, I)$ from $X_i$ to $X_j$ is bijective. Therefore, for the preimage $i\in j(\chi^{(f)})^{-1}$, we get $|X_i| = |X_j|$, $X_if = X_j\cap Xf$, and $\mbox{c}(f_i) = \mbox{d}(f_i)$ where $f_i \in B(f, I)$. Thus $f\in\ureg(T(X, \mathcal{P}))$ by Theorem \ref{char-unit-reg-TXP}. Since $S(X, \mathcal{P}) \subseteq \Gamma(X, \mathcal{P})$, it follows from Remark \ref{grp-unit-submon}(ii) that $f\in \ureg(\Gamma(X, \mathcal{P}))$.
\end{proof}

The following proposition characterizes unit-regular elements in $T_{E^*}(X)$.
\begin{proposition}
Let $\mathcal{P} = \{X_i|\; i\in I\}$ be a partition of $X$ and let $f\in T_{E^*}(X)$. Then $f \in \ureg(T_{E^*}(X))$ if and only if
\begin{enumerate}
\item[\rm(i)] $\chi^{(f)}\in \Gamma(I)$,
\item[\rm(ii)] $|X_i|=|X_j|$ whenever $i\chi^{(f)}=j$, and $\mbox{c}(f_i)=\mbox{d}(f_i)$ where $f_i\in B(f,I)$.
\end{enumerate}
\end{proposition}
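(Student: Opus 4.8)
The plan is to reduce everything to the already-established characterization of unit-regularity in the larger monoid $T(X,\mathcal{P})$ (Theorem \ref{char-unit-reg-TXP}), exploiting the chain $S(X,\mathcal{P}) \subseteq T_{E^*}(X) \subseteq T(X,\mathcal{P})$ together with Remark \ref{grp-unit-submon}. The decisive structural fact about $T_{E^*}(X)$, coming from \cite[Theorem 3.4]{shubh-c20}, is that $f \in T_{E^*}(X)$ forces its character to be injective, i.e. $\chi^{(f)} \in \Omega(I)$. Since $S(X,\mathcal{P})$ is the group of units of $T(X,\mathcal{P})$ and clearly lies inside $T_{E^*}(X)$, it is also the group of units of $T_{E^*}(X)$, so both parts of Remark \ref{grp-unit-submon} apply with $M = T(X,\mathcal{P})$ and $N = T_{E^*}(X)$.

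For the forward implication, suppose $f \in \ureg(T_{E^*}(X))$. By Remark \ref{grp-unit-submon}(i) we get $f \in \ureg(T(X,\mathcal{P}))$, so Theorem \ref{char-unit-reg-TXP} applies: in particular $\chi^{(f)} \in \ureg(T(I))$, and for each $j \in I\chi^{(f)}$ there is $i \in I$ with $|X_i| = |X_j|$, $X_i f = X_j \cap Xf$, and $\mbox{c}(f_i) = \mbox{d}(f_i)$. To obtain (i), I combine $\chi^{(f)} \in \ureg(T(I))$, which by Theorem \ref{char-unit-reg-TX} means $\mbox{c}(\chi^{(f)}) = \mbox{d}(\chi^{(f)})$, with the injectivity $\chi^{(f)} \in \Omega(I)$, which by Lemma \ref{inj-trans-X}(i) gives $\mbox{c}(\chi^{(f)}) = 0$; hence $\mbox{d}(\chi^{(f)}) = 0$ and $\chi^{(f)}$ is surjective by Lemma \ref{inj-trans-X}(ii), i.e. $\chi^{(f)} \in \Gamma(I)$. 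Since $\chi^{(f)}$ is then a bijection, the index $i$ with $i\chi^{(f)} = j$ is unique for each $j$, and the conditions $|X_i| = |X_j|$ and $\mbox{c}(f_i) = \mbox{d}(f_i)$ supplied by Theorem \ref{char-unit-reg-TXP} are exactly (ii).

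For the converse, assume (i) and (ii). Injectivity of $\chi^{(f)}$ (from membership in $T_{E^*}(X)$) together with (i) yields $\chi^{(f)} \in \Omega(I) \cap \Gamma(I) = S(I)$, whence $\chi^{(f)} \in \ureg(T(I))$. To apply Theorem \ref{char-unit-reg-TXP} it remains to check, for each $j \in I\chi^{(f)}$ and the unique $i$ with $i\chi^{(f)} = j$, that $X_i f = X_j \cap Xf$ (the cardinality and collapse--defect conditions are handed to us by (ii)). The inclusion $X_i f \subseteq X_j \cap Xf$ is immediate from $X_i f \subseteq X_j$. For the reverse inclusion, I take $y \in X_j \cap Xf$, write $y = xf$ with $x \in X_k$; then $y \in X_{k\chi^{(f)}} \cap X_j$ forces $k\chi^{(f)} = j = i\chi^{(f)}$, and injectivity of $\chi^{(f)}$ gives $k = i$, so $y \in X_i f$. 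Thus all hypotheses of Theorem \ref{char-unit-reg-TXP} hold and $f \in \ureg(T(X,\mathcal{P}))$; finally Remark \ref{grp-unit-submon}(ii) upgrades this to $f \in \ureg(T_{E^*}(X))$.

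The only genuinely new point, and hence the step to get right, is the passage from ``semi-balanced character'' to ``bijective character'': in $T(X,\mathcal{P})$ the character need only be semi-balanced, but the defining two-sided $E$-compatibility of $T_{E^*}(X)$ kills the collapse of $\chi^{(f)}$, so unit-regularity forces $\chi^{(f)}$ to be a permutation of $I$. Once this is observed, the verification of $X_i f = X_j \cap Xf$ and the remaining bookkeeping against Theorem \ref{char-unit-reg-TXP} are entirely routine.
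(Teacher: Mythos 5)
Your proof is correct and follows essentially the same route as the paper: both directions reduce to Theorem \ref{char-unit-reg-TXP} via Remark \ref{grp-unit-submon} together with the injectivity of $\chi^{(f)}$ supplied by membership in $T_{E^*}(X)$. The only (harmless) deviation is in how surjectivity of $\chi^{(f)}$ is deduced in the forward direction: you argue directly from the semi-balanced characterization (Theorem \ref{char-unit-reg-TX} plus Lemma \ref{inj-trans-X}), whereas the paper passes through $\chi^{(f)}\in \ureg(\Omega(I))$ and Proposition \ref{inj-reg}.
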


\begin{proof}
Suppose first that $f \in \ureg(T_{E^*}(X))$. Then $f \in \ureg(T(X,\mathcal{P}))$ by Remark \ref{grp-unit-submon}(i).

\begin{enumerate}
\item[\rm(i)] It follows that $\chi^{(f)} \in \ureg(T(I))$ by Lemma \ref{char-unit-reg}. Since $f\in T_{E^*}(X)$, we have $\chi^{(f)}\in \Omega(I)$ by \cite[Theorem 3.4]{shubh-c20}. Since $S(I) \subseteq \Omega(I)$, it follows from Remark \ref{grp-unit-submon}(ii) that $\chi^{(f)}\in \ureg(\Omega(I))$ and so $\chi^{(f)}\in S(I)$ by Proposition \ref{inj-reg}. Hence $\chi^{(f)}\in \Gamma(I)$.

\vspace{0.1cm}
\item[\rm(ii)] Let $i\chi^{(f)}=j$. From (i) , we see that $\chi^{(f)}\in S(I)$. Recall that $f\in \ureg(T(X,\mathcal{P}))$. Therefore, by Theorem \ref{char-unit-reg-TXP}, we get $|X_i|=|X_j|$ and $\mbox{c}(f_i)=\mbox{d}(f_i)$ where $f_i\in B(f,I)$.
\end{enumerate}

\vspace{0.1cm}	
Conversely, suppose that the given conditions hold. Since $S(X,\mathcal{P}) \subseteq T_{E^*}(X) \subseteq T(X,\mathcal{P})$, it suffices to show that $f \in \ureg(T(X,\mathcal{P}))$.

\vspace{0.1cm}
Since $f\in T_{E^*}(X)$, we have $\chi^{(f)}\in \Omega(I)$ by Theorem \cite[Theorem 3.4]{shubh-c20}. It follows from (i) that $\chi^{(f)}\in \Gamma(I) \cap \Omega(I) = S(I)$ and so $\chi^{(f)}\in \ureg(T(I))$.
	
\vspace{0.1cm}
Again, since $\chi^{(f)}\in S(I)$, we see that $X_j\cap Xf=X_if$ whenever $i\chi^{(f)}=j$. Combining these with condition (ii), Theorem \ref{char-unit-reg-TXP} concludes that $f \in \ureg(T(X,\mathcal{P}))$ as required.
\end{proof}


\section{The Submonoid $\Omega(X, \mathcal{P})$}

In this section, we characterize regular and unit-regular elements in $\Omega(X, \mathcal{}P)$, respectively. Let us first make the following useful remark.

\begin{remark}\label{Omega-inj-block}
Let $f\in T(X, \mathcal{P})$. Then $f\in \Omega(X, \mathcal{P})$ if and only if every map in $B(f, I)$ is injective.
\end{remark}

The next theorem characterizes regular elements in $\Omega(X,\mathcal{P})$.

\begin{theorem} \label{omega-reg}
Let $\mathcal{P}=\{X_i|\; i\in I\}$ be a partition of $X$ and let $f\in \Omega(X,\mathcal{P})$. Then $f\in \reg(\Omega(X,\mathcal{P}))$ if and only if for every $j\in I\chi^{(f)}$ there exists $i\in I$ such that the map $f_i \in B(f,I)$ from $X_i$ to $X_j$ is surjective.
\end{theorem}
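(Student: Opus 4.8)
The plan is to work block by block, using Lemma~\ref{fam-func} to pass between $f$ and its induced family $B(f,I)=\{f_i\}$, and Remark~\ref{Omega-inj-block}, which says that membership in $\Omega(X,\mathcal{P})$ is precisely the assertion that every $f_i$ is injective. The statement to be proved is the natural ``dual'' of the characterization of $\reg(\Gamma(X,\mathcal{P}))$ in \cite[Theorem~5.5]{shubh-c21}: there every $f_i$ is forced to be \emph{surjective} and regularity requires some block to map \emph{injectively} (hence bijectively) onto each $X_j$ with $j\in I\chi^{(f)}$; here every $f_i$ is injective, so I expect regularity to require some block to map \emph{surjectively} (hence bijectively) onto each such $X_j$.

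For the forward direction, suppose $fgf=f$ with $g\in\Omega(X,\mathcal{P})$, and fix $j\in I\chi^{(f)}$. Put $i:=j\chi^{(g)}$. Applying the character and \cite[Lemma~2.3]{puri16} gives $\chi^{(f)}\chi^{(g)}\chi^{(f)}=\chi^{(fgf)}=\chi^{(f)}$; writing $j=k\chi^{(f)}$ and evaluating this identity at $k$ yields $i\chi^{(f)}=j$. Thus $f_i$ is a map $X_i\to X_j$ and $g_j$ a map $X_j\to X_i$, and restricting the element-wise identity $fgf=f$ to the block $X_i$ gives $f_i g_j f_i=f_i$. Now both $f_i$ and $g_j$ are injective (as $f,g\in\Omega(X,\mathcal{P})$), so cancelling the rightmost injective $f_i$ from $f_i g_j f_i=f_i$ forces $f_i g_j=\mathrm{id}_{X_i}$; this makes $g_j$ surjective onto $X_i$, hence bijective, and therefore $f_i=g_j^{-1}$ is bijective as well. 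In particular $f_i\colon X_i\to X_j$ is surjective, as required.

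For the converse, assume that for each $j\in I\chi^{(f)}$ some block maps surjectively onto $X_j$; fixing one such choice $\sigma(j)$, the map $f_{\sigma(j)}\colon X_{\sigma(j)}\to X_j$ is surjective and, lying in $\Omega(X,\mathcal{P})$, injective, hence bijective. I then define $g$ block-wise: on $X_j$ with $j\in I\chi^{(f)}$ set $g_j:=f_{\sigma(j)}^{-1}\colon X_j\to X_{\sigma(j)}$, and on $X_j$ with $j\in I\setminus I\chi^{(f)}$ let $g_j:=\mathrm{id}_{X_j}$. Each $g_j$ is injective, so $g\in\Omega(X,\mathcal{P})$ by Remark~\ref{Omega-inj-block}. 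To check $fgf=f$, I use that $\mathrm{im}(f)\subseteq\bigcup_{j\in I\chi^{(f)}}X_j$: for any $x\in X_m$ with $m\chi^{(f)}=j$ we have $xf\in X_j$, so $xfg=(xf)f_{\sigma(j)}^{-1}\in X_{\sigma(j)}$ and $xfgf=(xf)f_{\sigma(j)}^{-1}f_{\sigma(j)}=xf$. The values of $g$ on blocks outside $I\chi^{(f)}$ never enter this computation, which is why the identity is a safe choice there.

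The main obstacle is the forward direction in the \emph{infinite} setting: since $f_i$ and $g_j$ may be injections between blocks of equal infinite cardinality, one cannot deduce surjectivity from a cardinality count, and indeed an injection of an infinite set into itself need not be onto. The algebraic identity $f_i g_j f_i=f_i$ combined with injectivity of $f_i$ is exactly what rescues this step: it upgrades the two-sided relation to $f_i g_j=\mathrm{id}_{X_i}$, from which bijectivity of both $f_i$ and $g_j$ follows purely formally, with no finiteness assumption. A secondary, routine point is to confirm that the constructed $g$ genuinely lies in $\Omega(X,\mathcal{P})$ and is unaffected by the arbitrary choices made on blocks outside $I\chi^{(f)}$.
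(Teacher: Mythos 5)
Your proof is correct and follows essentially the same route as the paper: both directions select the block indexed by $j\chi^{(g)}$ and exploit blockwise injectivity via Remark~\ref{Omega-inj-block}, and your converse constructs the same inner inverse $g$ from the inverses $f_{\sigma(j)}^{-1}$ padded by the identity on the remaining blocks. The only cosmetic difference is in the forward direction, where the paper establishes surjectivity of the relevant block map by an elementwise contradiction, whereas you cancel the injective $f_i$ on the right in $f_ig_jf_i=f_i$ to obtain $f_ig_j=\mathrm{id}_{X_i}$ and conclude bijectivity formally.
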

\begin{proof}
Suppose first that $f\in \reg(\Omega(X,\mathcal{P}))$, and let $j\in I\chi^{(f)}$. Then there exists $i\in I$ such that $i\chi^{(f)}=j$ and so $X_i f \subseteq X_j$ by definition of $\chi^{(f)}$. Then there is a map $f_i \in B(f,I)$ from $X_i$ to $X_j$. If $f_i$ is surjective, then we are done. Otherwise, suppose that $f_i$ is not surjective.
	
\vspace{0.1cm}
Recall that  $f\in \reg(\Omega(X,\mathcal{P}))$. There exists $g\in \Omega(X,\mathcal{P})$ such that $fgf = f$. Since $j\in I\chi^{(f)}\subseteq I$, there exists $l\in I\chi^{(g)}$ such that $j\chi^{(g)}=l$ and so $X_j g \subseteq X_l$ by definition of $\chi^{(g)}$. Then there is a map $g_j \in B(g,I)$ from $X_j$ to $X_l$. Recall that $X_if\subseteq X_j$, $X_jg\subseteq X_l$, and $fgf = f$. We obtain \[X_if= (X_if)gf\subseteq (X_jg)f\subseteq X_lf.\] Since $f$ preserves $\mathcal{P}$, we get $X_lf\subseteq X_j$. Therefore there is a map $f_l\in B(f,I)$ from $X_l$ to $X_j$. We now claim that $f_l$ is surjective.

\vspace{0.1cm}
Assume, to the contrary, that there exists $y\in X_j\setminus X_lf_l$. Write $yg_j=z$ and $zf_l=w$. Clearly $z\in X_l$,  $w\in X_j$, and $y\neq w$.
Since $f, g\in \Omega(X, \mathcal{P})$, the maps $g_j$ and $f_l$ are injective by Remark \ref{Omega-inj-block} and so $(wg_j)f_l\neq (yg_j)f_l$. But $(yg_j)f_l=zf_l=w$, and
\[(wg_j)f_l = (wg_j)f = (wg)f = (zf_l)gf= (zf)gf= zf =  zf_l = w\] which is a contradiction. Hence the map $f_l\in B(f, I)$ is surjective as required.

\vspace{0.1cm}
Conversely, suppose that the given condition holds. By Remark \ref{Omega-inj-block}, we know that every map in $B(f,I)$ is injective. Then, by hypothesis, for every $j\in I\chi^{(f)}$ there exists $i\in I$ such that the map $f_i\in B(f, I)$ from $X_i$ to $X_j$ is bijective. Denote by $h_j$ the inverse map of $f_i\colon X_i\to X_j$. Note that the map $h_j\colon X_j\to X_i$ is bijective. We now define a map $g\colon X\to X$ by

\begin{equation*}
xg=
\begin{cases}
xh_j & \text{if $x\in X_j$ where $j\in I\chi^{(f)}$},\\
x & \text{otherwise}.
\end{cases}
\end{equation*}
Clearly $g\in \Omega(X,\mathcal{P})$. We can verify in a routine manner that $fgf = f$. Hence $f\in \reg(\Omega(X,\mathcal{P}))$.
\end{proof}


The following proposition characterizes unit-regular elements in $\Omega(X,\mathcal{P})$.

\begin{proposition}
Let $\mathcal{P} = \{X_i|\; i\in I\}$ be a partition of $X$ and let $f\in \Omega(X,\mathcal{P})$. Then $f\in \ureg(\Omega(X,\mathcal{P}))$ if and only if
\begin{enumerate}
\item[\rm(i)] $\chi^{(f)}\in \ureg(T(I))$, and
\item[\rm(ii)] for each $j\in I\chi^{(f)}$ there exists $i\in I$ such that the map $f_i\in B(f,I)$ from $X_i$ to $X_j$ is surjective.
\end{enumerate}
\end{proposition}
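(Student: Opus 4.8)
The plan is to prove both implications by relaying through the ambient monoid $T(X,\mathcal{P})$ rather than building a unit by hand, invoking Theorem \ref{char-unit-reg-TXP} for the heavy lifting. Two structural facts drive the reduction. First, $S(X,\mathcal{P})$, the group of units of $T(X,\mathcal{P})$, satisfies $S(X,\mathcal{P}) \subseteq \Omega(X,\mathcal{P})$, since a permutation preserving $\mathcal{P}$ is in particular injective on each block and hence lies in $\Omega(X,\mathcal{P})$; this lets me apply Remark \ref{grp-unit-submon} with $M = T(X,\mathcal{P})$ and $N = \Omega(X,\mathcal{P})$. Second, by Remark \ref{Omega-inj-block} every block map $f_i \in B(f,I)$ is injective, so any \emph{surjective} such $f_i$ is automatically bijective. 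The combination of these two observations is exactly what translates the criterion in $\Omega(X,\mathcal{P})$ into the criterion of Theorem \ref{char-unit-reg-TXP} in $T(X,\mathcal{P})$.

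For the forward direction I would assume $f \in \ureg(\Omega(X,\mathcal{P}))$. Since $\Omega(X,\mathcal{P})$ is a submonoid of $T(X,\mathcal{P})$, Remark \ref{grp-unit-submon}(i) yields $f \in \ureg(T(X,\mathcal{P}))$, and then condition (i), namely $\chi^{(f)} \in \ureg(T(I))$, follows at once from Lemma \ref{char-unit-reg}. Condition (ii) is simply the regularity criterion of Theorem \ref{omega-reg} applied to $f$, which is regular because it is unit-regular; so nothing further is required here.

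For the converse I would assume (i) and (ii) and aim to verify condition (2) of Theorem \ref{char-unit-reg-TXP}, concluding $f \in \ureg(T(X,\mathcal{P}))$. Fix $j \in I\chi^{(f)}$ and take $i \in I$ as in (ii), so $f_i\colon X_i \to X_j$ is surjective; by Remark \ref{Omega-inj-block} it is also injective, hence bijective. Bijectivity of $f_i$ immediately gives clause (2)(i), $|X_i| = |X_j|$, and clause (2)(iii), $\mbox{c}(f_i) = \mbox{d}(f_i)$ (both equal $0$ by Lemma \ref{inj-trans-X}); and since $X_i f = X_i f_i = X_j \subseteq Xf$, it also gives clause (2)(ii), $X_i f = X_j \cap Xf$. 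With condition (1) of Theorem \ref{char-unit-reg-TXP} supplied by hypothesis (i), that theorem yields $f \in \ureg(T(X,\mathcal{P}))$. Finally, because $S(X,\mathcal{P}) = U(T(X,\mathcal{P})) \subseteq \Omega(X,\mathcal{P})$ and $f \in \Omega(X,\mathcal{P})$, Remark \ref{grp-unit-submon}(ii) upgrades this to $f \in \ureg(\Omega(X,\mathcal{P}))$, as desired.

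Once the reductions are set up, the argument is largely bookkeeping. The one genuine point to get right is the collapse in the converse: the cardinality and semi-balance conditions that Theorem \ref{char-unit-reg-TXP} demands for membership in $\ureg(T(X,\mathcal{P}))$ become automatic in $\Omega(X,\mathcal{P})$, precisely because a surjective block map there must be bijective, so that the seemingly weaker pair (i)--(ii) is in fact equivalent to the full list. I expect the only other delicate point to be ensuring the inner-inverse unit can be chosen inside $\Omega(X,\mathcal{P})$, but this is dispatched uniformly by Remark \ref{grp-unit-submon}(ii) through the inclusion $S(X,\mathcal{P}) \subseteq \Omega(X,\mathcal{P})$, so no explicit construction of a permutation $g$ is needed.
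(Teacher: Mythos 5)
Your proposal is correct and follows essentially the same route as the paper: both directions reduce to $T(X,\mathcal{P})$ via Remark \ref{grp-unit-submon}, with Lemma \ref{char-unit-reg} and Theorem \ref{omega-reg} handling the forward direction and Remark \ref{Omega-inj-block} upgrading surjective block maps to bijections so that Theorem \ref{char-unit-reg-TXP} applies in the converse. Your explicit verification that $X_i f = X_j \cap Xf$ is a small added detail the paper leaves implicit, but the argument is otherwise the same.
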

\begin{proof}
Suppose first that $f\in \ureg(\Omega(X,\mathcal{P}))$. Then $f\in \ureg(T(X,\mathcal{P}))$ by Remark \ref{grp-unit-submon}(i).

\begin{enumerate}
\item[\rm(i)] Then Lemma \ref{char-unit-reg} takes care of this part.

\vspace{0.1cm}
\item[\rm(ii)] Note that $f \in \reg(\Omega(X, \mathcal{P}))$ and hence Theorem \ref{omega-reg} takes care of this part.
\end{enumerate}
	
\vspace{0.1cm}
Conversely, suppose that the given conditions hold. Since $f\in \Omega (X,\mathcal{P})$, every map in $B(f,I)$ is injective by Remark \ref{Omega-inj-block}. Then, by hypothesis, for each $j\in I\chi^{(f)}$ there exists $i\in I$ such that the map $f_i\in B(f,I)$ from $X_i$ to $X_j$ is bijective and so $|X_i|=|X_j|$, $X_i f = X_j\cap Xf$, and $c(f_i)=d(f_i)$. Hence $f\in \ureg(T(X,\mathcal{P}))$ by Theorem \ref{char-unit-reg-TXP}. Since $S(X, \mathcal{P}) \subseteq \Omega(X, \mathcal{P})  \subseteq T(X, \mathcal{P})$, we thus get $f\in \ureg(\Omega(X,\mathcal{P}))$ by Remark \ref{grp-unit-submon}(ii).
\end{proof}


\end{document}